\documentclass[11pt]{amsart}
\usepackage{amsfonts, amsmath, amssymb, color}
\usepackage{amsthm}
\usepackage{booktabs}
\usepackage{hhline}
\usepackage{lipsum}
\usepackage{lscape}
\usepackage{subfig}
\usepackage{textcomp}
\usepackage{tikz}
\usetikzlibrary{decorations.pathmorphing,shapes,arrows,positioning}
\usepackage{xcolor}
\usepackage{young}
\usepackage{youngtab}
\usepackage{ytableau}

\allowdisplaybreaks[4]
\frenchspacing

\theoremstyle{plain}
\pagestyle{plain} \oddsidemargin 0pt \evensidemargin 0pt
\textheight=225 mm \textwidth=170 mm

\voffset=-1.00 true cm
\newtheorem{thm}{Theorem}[section]
\newtheorem{lem}[thm]{Lemma}
\newtheorem{prop}[thm]{Proposition}
\newtheorem{cor}[thm]{Corollary}
\newtheorem{rem}[thm]{Remark}

\theoremstyle{definition}

\newtheorem{exmp}[thm]{Example}

\newcommand{\la}{\lambda}

\numberwithin{equation}{section} \errorcontextlines=0

\begin{document}
\title{On irreducible characters of the Iwahori-Hecke algebra in type $A$}
\author{Naihuan Jing}
\address{Department of Mathematics, North Carolina State University, Raleigh, NC 27695, USA}
\email{jing@ncsu.edu}
\author{Ning Liu}
\address{School of Mathematics, South China University of Technology,
Guangzhou, Guangdong 510640, China}
\email{mathliu123@126.com}
\subjclass[2010]{Primary: 20C08, 17B69; Secondary: 05E10}\keywords{Hecke algebras, Jing operators, Schur polynomials, Murnaghan-Nakayama rule}

\maketitle

\begin{abstract} We use vertex operators to compute irreducible characters of the Iwahori-Hecke algebra of type $A$. Two general formulas are given for the irreducible characters in terms of those of the symmetric groups or the Iwahori-Hecke algebras in lower degrees.
Explicit formulas are derived for the irreducible characters labeled by hooks and two-row partitions. Using duality, we 
also formulate a determinant type Murnaghan-Nakayama formula and give another proof of Ram's combinatorial Murnaghan-Nakayama formula.
As applications, we study super-characters of the Iwahori-Hecke algebra as well as the bitrace of the regular representation and provide a simple proof of the Halverson-Luduc-Ram formula.
\end{abstract}

\section{Introduction}

Let $\mathbb{C}(q)$ be the field of rational functions in the variable $q$. The Iwahori-Hecke algebra $H_{n}(q)$ is the unital associative algebra
over $\mathbb{C}(q)$ generated by generators $T_{1}, T_{2},\ldots, T_{n-1}$ subject to the relations
\begin{align}\label{e:Hecke1}
T_{i}T_{j}&=T_{j}T_{i},  ~~~~\text{if}~ |i-j|>1,\\ \label{e:Hecke2}
T_{i}T_{i+1}T_{i}&=T_{i+1}T_{i}T_{i+1},\\ \label{e:Hecke3}
T_{i}^{2}&=(q-1)T_{i}+q.
\end{align}

The algebra $H_n(q)$ becomes a Frobenius algebra under the non-degenerate associative bilinear form $B(h_1, h_2)=\chi(h_1h_2)$, where
$\chi(\sum_{h\in H_n(q)}a_h h)=a_1$. The algebra $H_{n}$ is semisimple \cite{Ho, W} and isomorphic to $\mathbb C[\mathfrak S_n]$, so its irreducible representations are also labeled
by partitions of $n$. Let $\chi^{\lambda}$ be the irreducible character of $H_n(q)$ associated with partition $\lambda\vdash n$.
Ram \cite{Ram} used the quantum Schur-Weyl duality to prove the Frobenius type character formula
for $H_n(q)$ in terms of one-row Hall-Littlewood functions and Schur symmetric functions (see also \cite{KW, KV}). He also gave the $q$-analogue Murnaghan-Nakayama formula for $H_n(q)$ as an iterative combinatorial
rule to compute the irreducible characters in terms of those corresponding to smaller partitions.

Since Ram's work, there have been various discussions and generalizations on the rule, cf. \cite{vdJ, Ha, P, G, Ro, Sh} etc.
A nice computer algebra program \cite{GHLMP} on Hecke algebras
is also available.
All these developments have shown that the $q$-Murnaghan-Nakayama rule remains the most
practical algorithm to compute the irreducible characters of $H_n(q)$ (also see Starkey's rule \cite{St, G}).
Nevertheless, how to effectively carry out the computation deserves further study; getting new formulas and
reformulating the known ones and related structures may also help practical computation and offer new perspective.

The goal of this paper is to offer an efficient and practical method to compute irreducible characters of $H_n(q)$ using vertex operators. In the vertex operator approach to symmetric functions \cite{Jing1, Jing3}, Hall-Littlewood and Schur symmetric functions are expressed as simple products of vertex operators. Unlike the usual raising operators in symmetric functions which are often not well-defined, the vertex operators
obey nice algebraic structures, so the irreducible character values can be studied in terms of the
vertex operators and their dual operators.
Using this idea, we first derive two general formulas to compute irreducible character values of the Hecke algebra: the first
one computes the character in terms of those of
the symmetric group of lower degrees, while the second one reduces the computation to lower degree characters of the Hecke algebra. We emphasize that
both general formulas are different from the $q$-Murnaghan-Nakayama formula.
Thanks to these new formulas, we are able to give explicit compact formulas of the irreducible $H_n(q)$-characters labeled by hooks and two-row partitions. The formulas are then utilized to give a simple proof of
the $q$-analogue Berele-Regev formula for $H_n(q)$, which was first obtained \cite{Zhao} by using the iterative Murnaghan-Nakayama formula.

Using the same strategy we 
formulate a determinant type Murnaghan-Nakayama rule for $H_n(q)$ by exploiting the Jacobi-Trudi rule. The determinantal version implies the combinatorial Murnaghan-Nakayama rule easily. Finally we compute the bitrace of the regular representation of the Iwarhoti-Hecke algebra, originally computed by Halverson, Luduc and Ram \cite{HLR} using Roichman's formula. Our method is a straightforward computation using the vertex operator techniques.

The structure of the paper is as follows. Section 2 discusses how to treat irreducible characters by 
the vertex operator realization of Schur and
 Hall-Littlewood symmetric functions. We express all irreducible characters of the Hecke algebra of type $A_{n-1}$
 as matrix coefficients of vertex operators
 in \cite{Jing1}. Based on this we derive two general formulas to compute all irreducible characters of the Iwahori-Hecke algebra, including
 one to express the characters in terms of those of the symmetric group in lower degrees. We derive compact formulas for the irreducible characters corresponding to hooks and two-row partitions
 (see \eqref{e:hook}-\eqref{e:two}). In Section 3, we first use the general vertex operator formula to formulate
a determinant type Murnaghan-Nakayama formula for $H_n(q)$, which gives another proof of the combinatorial one.
Our current approach to the problem is based upon the idea of dual vertex operators developed in \cite{Jing1} and \cite{Jing4}, which was first used in \cite{Jing3} on Schur's Q-functions.

\section{Vertex operators and character values $\chi^{\la}_{\mu}(q)$}
Let $\Lambda$ be the ring of symmetric functions in the $x_n$ ($n\in\mathbb N$) over the integers. In this paper we mostly work with
the ring $\Lambda_F$ over the field $F=\mathbb Q(t)$ and view it as a graded ring under the natural degree.

The ring $\Lambda$ has several linear bases indexed by partitions.
A {\it partition} $\lambda=(\lambda_1,\lambda_2,\ldots)$ is a weakly decreasing sequence of non-negative integers. The sum $|\lambda|=\sum_i\lambda_i$ is called the weight
 and the number of nonzero parts is called the length $l(\lambda)$.
A partition $\lambda$ of weight $n$ is denoted by
$\lambda \vdash n$, and the
set of partitions is denoted by $\mathcal P$. When the parts of $\lambda$ are arranged in increasing order, so $\lambda=(1^{m_{1}}2^{m_{2}}\ldots)$
where $m_{i}=Card\{\lambda_{j}=i\mid 1\leq j\leq l(\lambda)\}$
is the multiplicity of $i$ in $\lambda$. $\la$ is called a {\it strict partition} if $m_i\leq 1$. When the finite sequence $\lambda=(\lambda_1,\lambda_2,\ldots)$ of nonnegative integers
is an ordered or
not necessarily weakly decreasing one such that $\sum_i\la_i=n$, $\lambda$ is called a {\it composition} of $n=\sum_i\lambda_i$, denoted as $\lambda\models n$. The length
$l(\lambda)$ is the number of nonzero parts.

A partition $\lambda$ is visualized by its Young diagram: the set of nodes (or boxes situated at) $(i,j)\in \mathbb{Z}_+^{2}$ such that $1\leq j\leq \lambda_{i}$. If $\lambda$ is a diagram, then an inner corner of $\lambda$ is a node $(i,j)\in \lambda$ whose removal still leaves the diagram as
that of a partition. The conjugate partition $\la'=(\la_1', \ldots, \la_{\la_1}')$ corresponds to the reflection of
the Young diagram along the diagonal.

For each $r>1$, let $p_{r}=\sum x_{i}^{r}$ be the $r$th power-sum. Then $p_{\lambda}=p_{\lambda_{1}}p_{\lambda_{2}}\cdots p_{\lambda_{l}} (\lambda\in\mathcal P)$ form a $\mathbb Q$-basis of $\Lambda_{\mathbb Q}$. Let $s_{\lambda}$ be the
Schur function associated with the partition $\lambda$, then $s_{\lambda}$'s form an orthonormal basis $\Lambda$ under the inner product
\begin{align}\label{e:inner}
\langle p_{\lambda}, p_{\mu}\rangle=\delta_{\lambda\mu}z_{\lambda}
\end{align}
where $z_{\lambda}=\prod_{i\geq 1}\lambda_im_i(\lambda)!$. The function $p_n$ acts on $\Lambda$ as a multiplication operator, and its
the dual operator 
is the differential operator $p_n^* =n\frac{\partial}{\partial p_n}$. Note that * is $\mathbb Q(t)$-linear and  anti-involutive satisfying
\begin{equation}
\langle p_nu, v\rangle=\langle u, p_n^*v\rangle
\end{equation}
for $u, v\in \Lambda$.

We now recall the vertex operator realization of the Schur symmetric functions \cite{Jing3}.
Let
$S(z)$ and the dual vertex operator $S^*(z)$ be the linear maps: $\Lambda\longrightarrow \Lambda[[z, z^{-1}]]$ defined by
\begin{align}\label{e:Schurop}
S(z)&=\mbox{exp} \left( \sum\limits_{n\geq 1} \dfrac{1}{n}p_nz^{n} \right) \mbox{exp} \left( -\sum \limits_{n\geq 1} \frac{\partial}{\partial p_n}z^{-n} \right)=\sum_{n\in\mathbb Z}S_nz^{n},\\ \label{e:Schurop*}
S^*(z)&=\mbox{exp} \left(-\sum\limits_{n\geq 1} \dfrac{1}{n}p_nz^{n} \right) \mbox{exp} \left(\sum \limits_{n\geq 1} \frac{\partial}{\partial p_n}z^{-n} \right)=\sum_{n\in\mathbb Z}S^*_nz^{-n}.
\end{align}
We use the convention to index the components by their degrees.
The operators $S_n \in \mathrm{End}(\Lambda)$ are the Bernstein vertex operators realizing the Schur functions. The dual operators
$S_n^*\in \mathrm{End}(\Lambda)$, introduced in \cite{Jing1}, also realize the Schur functions.

The following relations will be useful in our discussion.
\begin{prop}\cite{Jing1} (1) The components of $S(z)$ and $S^{*}(z)$ obey the following commutation relations:
\begin{align}\label{e:relation}
S_{m}S_{n}+S_{n-1}S_{m+1}&=0,\\
S^{*}_{m}S^{*}_{n}+S^{*}_{n+1}S^{*}_{m-1}&=0,\\
S_{m}S^{*}_{n}+S^{*}_{n-1}S_{m-1}&=\delta_{m,n}.
\end{align}
(2) For any composition $\mu=(\mu_{1},\ldots,\mu_{k})$, the
product $S_{\mu_{1}}\cdots S_{\mu_{k}}.1=s_{\mu}$ is the
Schur function labeled by $\mu$. In general, $s_{\mu}=0$ or $\pm s_{\lambda}$ for a partition $\lambda$ such that $\lambda\in \mathfrak{S}_{l}(\mu+\delta)-\delta.$ Here $\delta=(l-1,l-2,\ldots,1,0),$ where $l = l(\mu)$.
Moreover, $S_{-n}.1=\delta_{n,0}, S^{*}_{n}.1=\delta_{n,0}, (n\geq0)$.
\end{prop}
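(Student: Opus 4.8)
The plan is to reduce every statement to the elementary commutation rules of the two ``half-vertex operators'' out of which $S(z)$ and $S^*(z)$ are assembled. Write $\Gamma_+(z)=\exp\!\big(\sum_{n\ge1}\frac1n p_nz^n\big)$ for the creation (multiplication) part and $\Gamma_-(z)=\exp\!\big(-\sum_{n\ge1}\frac{\partial}{\partial p_n}z^{-n}\big)$ for the annihilation part, so that $S(z)=\Gamma_+(z)\Gamma_-(z)$ and $S^*(z)=\Gamma_+(z)^{-1}\Gamma_-(z)^{-1}$. Because $[\frac{\partial}{\partial p_n},p_m]=\delta_{mn}$ is central, a single use of Baker--Campbell--Hausdorff produces the contraction formulas: moving an annihilation factor past a creation factor costs a scalar $e^{\mp\sum_{n\ge1}\frac1n(w/z)^n}=(1-w/z)^{\pm1}$, e.g.\ $\Gamma_-(z)\Gamma_+(w)=(1-w/z)\,\Gamma_+(w)\Gamma_-(z)$ and $\Gamma_-(z)\Gamma_+(w)^{-1}=(1-w/z)^{-1}\Gamma_+(w)^{-1}\Gamma_-(z)$. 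Everything else is bookkeeping on top of these four identities.

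For the first two relations in part~(1) I would normally order a product in each of its two orderings and compare. Since the $\Gamma_+$'s commute among themselves and the $\Gamma_-$'s likewise, this gives $S(z)S(w)=(1-w/z):\!S(z)S(w)\!:$ and $S(w)S(z)=(1-z/w):\!S(z)S(w)\!:$ with a common normally ordered factor. Multiplying by $z$ and $w$ respectively and adding, the prefactors cancel, leaving the polynomial identity $z\,S(z)S(w)+w\,S(w)S(z)=0$; the coefficient of $z^{m+1}w^{n}$ is exactly $S_mS_n+S_{n-1}S_{m+1}=0$. The $S^*S^*$ relation is identical in shape because the inverse half-operators contract with the very same factor $(1-w/z)$.

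The mixed relation is where the genuine content lies. Now the contraction factor is $(1-w/z)^{-1}$, so $S(z)S^*(w)=(1-w/z)^{-1}:\!S(z)S^*(w)\!:$ while $S^*(w)S(z)=(1-z/w)^{-1}:\!S(z)S^*(w)\!:$, and the two geometric series must be expanded in the opposite regions $|z|>|w|$ and $|w|>|z|$. Forming $S(z)S^*(w)+\frac{z}{w}S^*(w)S(z)$, the scalar prefactors combine to $\sum_{k\ge0}(w/z)^k+\sum_{k\ge1}(z/w)^k=\sum_{k\in\mathbb Z}(w/z)^k=\delta(w/z)$, the formal delta. The key is that the delta collapses the operator factor to the diagonal, where $:\!S(w)S^*(w)\!:=\Gamma_+(w)\Gamma_+(w)^{-1}\Gamma_-(w)\Gamma_-(w)^{-1}=1$; hence $S(z)S^*(w)+\frac{z}{w}S^*(w)S(z)=\delta(z/w)$, whose $z^mw^{-n}$-coefficient reads $S_mS^*_n+S^*_{n-1}S_{m-1}=\delta_{m,n}$. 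I expect this delta-function step --- keeping the two expansion domains straight and using the collapse $:\!S(w)S^*(w)\!:=1$ --- to be the main obstacle, since it is the only place the outcome is not a plain polynomial identity.

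For part~(2) I would first record the vacuum action. As $\Gamma_-(z).1=\Gamma_-(z)^{-1}.1=1$, we get $S(z).1=\Gamma_+(z).1=H(z):=\sum_{n\ge0}h_nz^n$, the generating series of the complete homogeneous symmetric functions, and $S^*(z).1=\Gamma_+(z)^{-1}.1=H(z)^{-1}$, a series in nonnegative powers of $z$ only; comparing powers yields $S_n.1=h_n$, hence $S_{-n}.1=\delta_{n,0}$ and $S^*_n.1=\delta_{n,0}$ for $n\ge0$, which is the ``moreover''. Iterating the $SS$ contraction to normally order the whole string leaves only the cross-contractions between $\Gamma_-(z_i)$ and $\Gamma_+(z_j)$ for $i<j$, so $S(z_1)\cdots S(z_l).1=\prod_{i<j}(1-z_j/z_i)\prod_{i=1}^l H(z_i)$. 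Writing $\prod_{i<j}(1-z_j/z_i)=\prod_{i<j}(z_i-z_j)\big/\prod_i z_i^{l-i}$, and noting that $S_{\mu_1}\cdots S_{\mu_l}.1$ is the coefficient of $z^\mu:=z_1^{\mu_1}\cdots z_l^{\mu_l}$ in $S(z_1)\cdots S(z_l).1$, it equals the coefficient of $z^{\mu+\delta}$ in $\prod_{i<j}(z_i-z_j)\prod_i H(z_i)$; expanding the Vandermonde $\prod_{i<j}(z_i-z_j)=\sum_{w\in\mathfrak S_l}\mathrm{sgn}(w)\,\prod_iz_i^{\delta_{w(i)}}$ turns this into $\sum_{w}\mathrm{sgn}(w)\prod_i h_{\mu_i+\delta_i-\delta_{w(i)}}=\det\big(h_{\mu_i-i+j}\big)_{i,j=1}^l$. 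This Jacobi--Trudi determinant equals $s_\mu$ when $\mu$ is a partition; for a general composition the antisymmetry of the determinant under permutations of $\mu+\delta$ gives $0$ if $\mu+\delta$ has a repeated entry and $\pm s_\lambda$ otherwise, with $\lambda+\delta$ the decreasing rearrangement, i.e.\ $\lambda\in\mathfrak S_l(\mu+\delta)-\delta$. The only delicate point here is matching the coefficient extraction with Jacobi--Trudi, which the Vandermonde makes transparent.
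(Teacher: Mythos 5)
Your proof is correct, and it follows the standard vertex-operator calculus (half-operator contractions, the formal delta function for the mixed relation, and the Vandermonde/Jacobi--Trudi computation for the vacuum products). The paper itself states this proposition without proof, citing \cite{Jing1}, and your argument is essentially the derivation given in that reference, so there is nothing to add.
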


Let $q_{n}=q_{n}(x;t)$ be the generalized homogeneous symmetric function defined by
\begin{align}\label{e:qop}
Q(z)=\mbox{exp} \left( \sum\limits_{n=1}^{\infty}\frac{1-t^{n}}{n}p_{n}z^{n} \right)=\sum\limits_{n\geq0}q_{n}z^{n},
\end{align}
and $q_{n}=0$ if $n<0$. Denote
$q_{\lambda}=q_{\lambda_{1}}q_{\lambda_{2}}\cdots q_{\lambda_{l}}$
for any partition $\lambda$, then the set $\{q_{\lambda}\}$ forms a basis of $\Lambda_{\mathbb{Q}(t)}$.
Their dual operators with respect to the inner product \eqref{e:inner} are defined by
\begin{align}\label{e:qop*}
Q^{*}(z)=\mbox{exp} \left( \sum\limits_{n=1}^{\infty}(1-t^{n})\frac{\partial}{\partial p_{n}}z^{-n} \right)=\sum\limits_{n\geq0}q_{n}^{*}z^{-n},
\end{align}
and $q_n^*=0$ if $n<0$. In particular, $q^*_n.1=\delta_{n, 0}$ for $n\geq 0$.

For $n\geq0,$ by \eqref{e:qop} and \eqref{e:Schurop*}, we have
\begin{align}
q_{n}&=\sum\limits_{\lambda\vdash n}\frac{1}{z_{\lambda}(t)}p_{\lambda},\\
S^*_{-n}.1&=\sum\limits_{\lambda\vdash n}\frac{(-1)^{l(\lambda)}}{z_{\lambda}}p_{\lambda},
\end{align}
where $z_{\lambda}(t)=\prod\limits_{i\geq1}\frac{i^{m_{i}(\lambda)}m_{i}(\lambda)!}{1-t^{\lambda_{i}}},$ and $z_{\lambda}=z_{\lambda}(0).$

Let $H_n(q)$ be the Iwahori-Hecke algebra of type $A$ (see \eqref{e:Hecke1}-\eqref{e:Hecke3}).
Let $w=s_{i_1}\cdots s_{i_k}$ be a reduced expression of $w\in \mathfrak S_n$, where $s_i=(i,i+1)$. We define $T_w=T_{i_1}\cdots T_{i_k}$, which is
well-defined and independent from the choice of reduced expressions.
Then $H_n(q)$ has a linear basis consisting of $T_w$, $w\in \mathfrak S_n$.

For $\sigma_r=(12\cdots r)\in \mathfrak S_n$ in cycle notation, let $T_{\sigma_r}$ be the corresponding element in $H_n(q)$. For
 any composition $\mu=(\mu_1, \ldots, \mu_l)$ of $n$, let $\sigma_{\mu}=\sigma_{\mu_1}\times \cdots \times \sigma_{\mu_l}\in \mathfrak S_{\mu_1}\times \cdots\times \mathfrak S_{\mu_l}\hookrightarrow \mathfrak S_n$, we define $T_{\sigma_{\mu}}=T_{\sigma_{\mu_1}}\cdots T_{\sigma_{\mu_l}}$.
Let $\phi: H_n(q)\longrightarrow \mathrm{End}(V)$ be an irreducible representation associated with partition $\lambda$. In general the character $\chi^{\lambda}(T)=\mathrm{Tr}(\phi(T))$ is
no longer a function of conjugacy classes of $S_n$. However, it is known that all irreducible character values at elements $T_w$ are determined by their values at the element $T_{\sigma_{\mu}}$ \cite{GP}. For this reason, we will denote $\chi^{\lambda}(T_{\sigma_{\mu}})=\chi^{\lambda}_{\mu}(q)$.

Denote $\tilde{q}_{r}(t)=\frac{t^{r}}{t-1}q_{r}(t^{-1})$ and let
$\tilde{q}_{\mu}(t)=\tilde{q}_{\mu_{1}}(t)\tilde{q}_{\mu_{2}}(t)\cdots\tilde{q}_{\mu_{l}}(t)=\frac{t^{|\mu|}}{(t-1)^{l(\mu)}}q_{\mu}(t^{-1}).$
We have the following Frobenius type formula for the characters of $H_{n}(q)$ from \cite{Ram}.

\begin{prop} \cite{Ram} The irreducible character $\chi^{\lambda}$ of $H_{n}$ corresponding to $\lambda$ is determined by
\begin{align}\label{e:characters}
\tilde{q}_{\mu}(q)=\sum\limits_{\lambda\vdash n}\chi^{\lambda}_{\mu}(q)s_{\lambda}.
\end{align}
where $\mu\vdash n$ and $s_{\lambda}$ is the Schur function associated with partition $\lambda$.
\end{prop}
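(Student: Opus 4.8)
The final statement to prove is Ram's Frobenius-type character formula
\begin{equation*}
\tilde{q}_{\mu}(q)=\sum_{\lambda\vdash n}\chi^{\lambda}_{\mu}(q)s_{\lambda},
\end{equation*}
expressing the modified Hall--Littlewood-type symmetric function $\tilde q_\mu$ as a generating function for the irreducible Hecke characters with Schur functions as coefficients. The plan is to realize this as the image of a Frobenius characteristic map sending a suitable class function on $H_n(q)$ to $\Lambda_{\mathbb Q(q)}$, so that the equation becomes the statement that the regular-representation-style decomposition of $\tilde q_\mu$ into Schur functions records exactly the character values $\chi^\lambda_\mu(q)$.

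First I would pin down the representation-theoretic meaning of $\tilde q_\mu(q)$. Starting from the definition $q_n=\sum_{\lambda\vdash n}z_\lambda(t)^{-1}p_\lambda$ given in the excerpt and the normalization $\tilde q_r(t)=\tfrac{t^r}{t-1}q_r(t^{-1})$, I would expand $\tilde q_\mu(q)$ in the power-sum basis and identify the coefficient of each $p_\nu$ as a weighted sum over the relevant $q$-deformed class functions; the key input here is the quantum Schur--Weyl duality, which realizes $H_n(q)$ as the commutant of the quantum group $U_q(\mathfrak{gl}_N)$ acting on $V^{\otimes n}$, so that the bitrace of $T_{\sigma_\mu}$ together with the group-like element computing the $U_q$-weight generating function produces precisely $\tilde q_\mu(q)$ evaluated in $N$ variables. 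Second, I would decompose $V^{\otimes n}$ as a bimodule $\bigoplus_\lambda W_\lambda\otimes U_\lambda$, where $W_\lambda$ is the irreducible $U_q(\mathfrak{gl}_N)$-module and $U_\lambda$ the irreducible $H_n(q)$-module labeled by $\lambda$. Taking the appropriate (super)trace of the commuting actions of $T_{\sigma_\mu}$ and the weight operator then factors as $\sum_\lambda \chi^\lambda_\mu(q)\,\mathrm{ch}(W_\lambda)$, and the crucial fact is that the character of $W_\lambda$ in the $N$ variables is exactly the Schur polynomial $s_\lambda$. Matching the two computations of the same trace yields the claimed identity once one lets $N\to\infty$ to pass from Schur polynomials to Schur functions.

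The main obstacle will be verifying the bitrace computation that identifies the $U_q$-side trace of $T_{\sigma_\mu}$ with $\tilde q_\mu(q)$ rather than with the naive $q_\mu$ or $p_\mu$. This is where the specific normalization $\tilde q_r(t)=\tfrac{t^r}{t-1}q_r(t^{-1})$ must be shown to arise naturally, and it is also where the deformation parameter $q=t$ enters, reflecting the eigenvalues $q$ and $-1$ of the generators $T_i$ via the relation $T_i^2=(q-1)T_i+q$. I would therefore devote the most care to tracking how the Hecke relation \eqref{e:Hecke3} forces the long-cycle element $T_{\sigma_r}$ to act on $V^{\otimes r}$ with a trace given by a single $\tilde q_r$, carefully handling the signs and powers of $t$ that the substitution $t\mapsto t^{-1}$ and the prefactor $t^r/(t-1)$ encode.

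An alternative, more self-contained route avoids Schur--Weyl duality altogether and instead proves the formula by orthogonality. Since $\{s_\lambda\}$ is an orthonormal basis under \eqref{e:inner}, the coefficients in any expansion $\tilde q_\mu(q)=\sum_\lambda c^\lambda_\mu s_\lambda$ are recovered as $c^\lambda_\mu=\langle \tilde q_\mu(q),s_\lambda\rangle$, so it suffices to check that these inner products satisfy the same branching recursion in $\mu$ that the character values $\chi^\lambda_\mu(q)$ obey. Concretely, I would use the vertex-operator realization $S_{\lambda_1}\cdots S_{\lambda_l}.1=s_\lambda$ from the Proposition above together with the commutation relations \eqref{e:relation} to rewrite $\langle \tilde q_\mu(q),s_\lambda\rangle$ via the dual operators $q_n^*$ and $S_n^*$, turning the pairing into an iterative peeling-off of one part of $\mu$ at a time; this reduces the claim to the base case $n=1$ and an induction that mirrors the branching rule for Hecke characters. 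I expect the Schur--Weyl approach to be cleaner conceptually, but the vertex-operator orthogonality approach is better aligned with the methods this paper develops in Section 2 and would let me reuse the operator identities stated in the excerpt directly.
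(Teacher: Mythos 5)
The paper itself offers no proof of this proposition: it is quoted verbatim from Ram's paper \cite{Ram} and used as the starting point for everything that follows, so there is nothing internal to compare against. Your primary route --- quantum Schur--Weyl duality, the bimodule decomposition $V^{\otimes n}\cong\bigoplus_\lambda W_\lambda\otimes U_\lambda$, and the identification of the weighted trace of $T_{\sigma_\mu}$ with $\tilde q_\mu(q)$ on one side and with $\sum_\lambda\chi^\lambda_\mu(q)s_\lambda(x_1,\dots,x_N)$ on the other --- is in substance Ram's original argument, and you have correctly located the one genuinely delicate point, namely the trace computation for the long-cycle element $T_{\sigma_r}$ that produces the specific normalization $\tilde q_r(q)=\frac{q^r}{q-1}q_r(q^{-1})$ rather than $q_r$ or $p_r$. (One small caution: for type $A$ Schur--Weyl duality with $U_q(\mathfrak{gl}_N)$ the relevant trace is an ordinary weighted trace, not a supertrace; the super version belongs to the sign $q$-permutation representation discussed later in the paper.)

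Your alternative ``orthogonality'' route, however, has a gap as stated: you propose to verify that $\langle\tilde q_\mu(q),s_\lambda\rangle$ satisfies ``the same branching recursion in $\mu$ that the character values $\chi^\lambda_\mu(q)$ obey,'' but no such recursion for the Hecke characters is available independently of the Frobenius formula itself --- the Murnaghan--Nakayama rule, which is the natural candidate, is derived \emph{from} \eqref{e:characters} both in Ram's paper and in Section 4 of this one. To make that route non-circular you would need an independent construction of the irreducible $H_n(q)$-modules (e.g.\ the seminormal form or a cellular basis) from which a restriction rule can be extracted, which is a substantial additional input. So the Schur--Weyl argument should be regarded as the actual proof, and the second sketch as a consistency check rather than an alternative derivation.
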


Therefore, 
\begin{align}\label{e:chi}
\chi^{\lambda}_{\mu}(q)&=\langle \tilde{q}_{\mu}(q), s_{\lambda}\rangle
=\frac{q^{|\mu|}}{(q-1)^{l(\mu)}}\langle q_{\mu}(q^{-1}), S_{\lambda}.1\rangle,
\end{align}
and we are going to compute $g^{\lambda}_{\mu}(t):=\langle q_{\mu}(t), S_{\lambda}.1\rangle$ in the following.

\begin{prop}\label{Up Down}
For any $m\in\mathbb Z_+, n\in\mathbb{Z}$
\begin{align}\label{e:com1}
S^{*}_{n}q_{m}=q_{m}S^{*}_{n}+(1-t)\sum\limits_{k=1}^{m}q_{m-k}S^*_{n-k},\\ \label{e:com2}
q^{*}_{m}S_{n}=S_{n}q^{*}_{m}+(1-t)\sum\limits_{k=1}^{m}S_{n-k}q^*_{m-k}.
\end{align}
\end{prop}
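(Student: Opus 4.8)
The plan is to prove both commutation relations by working directly with the generating-function (vertex operator) forms and then extracting the coefficient identities. Since \eqref{e:com2} is the formal dual of \eqref{e:com1} under the adjoint $*$ with respect to the inner product \eqref{e:inner}, I expect to prove one of them in full and obtain the other by applying $*$, using that $*$ is an anti-involution (so that the adjoint of a product reverses order). Concretely, I would first establish the generating-function identity
\begin{align}\label{e:genfn}
S^{*}(w)\,Q(z)=\frac{1-tz/w}{1-z/w}\,Q(z)\,S^{*}(w),
\end{align}
and then read off \eqref{e:com1} by comparing coefficients of $w^{-n}z^{m}$ on both sides.

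First I would compute the product $S^{*}(w)Q(z)$ by moving the annihilation (differential-operator) part of $S^{*}(w)$ in \eqref{e:Schurop*} past the creation part of $Q(z)$ in \eqref{e:qop}. The key step is the normal-ordering computation: writing $Q(z)=\exp(\sum_{n\ge1}\frac{1-t^{n}}{n}p_{n}z^{n})$ and the right-hand exponential of $S^{*}(w)$ as $\exp(\sum_{n\ge1}\frac{\partial}{\partial p_{n}}w^{-n})$, I would use the standard fact that for operators with a scalar commutator one has $e^{A}e^{B}=e^{[A,B]}e^{B}e^{A}$. Here $A=\sum_{n\ge1}\frac{\partial}{\partial p_{n}}w^{-n}$ and $B=\sum_{n\ge1}\frac{1-t^{n}}{n}p_{n}z^{n}$, so that $[A,B]=\sum_{n\ge1}\frac{1-t^{n}}{n}(z/w)^{n}$ is a scalar. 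Summing this series gives
\begin{align}\label{e:scalar}
[A,B]=\sum_{n\ge1}\frac{1}{n}\Bigl(\frac{z}{w}\Bigr)^{n}-\sum_{n\ge1}\frac{1}{n}\Bigl(\frac{tz}{w}\Bigr)^{n}=\log\frac{1-tz/w}{1-z/w},
\end{align}
and exponentiating yields exactly the scalar prefactor $\frac{1-tz/w}{1-z/w}$ in \eqref{e:genfn}; the leftover creation part of $S^{*}(w)$ commutes with $Q(z)$ since it involves only the $p_{n}$, proving \eqref{e:genfn}.

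Next I would extract coefficients. Expanding $\frac{1-tz/w}{1-z/w}=1+(1-t)\sum_{k\ge1}(z/w)^{k}$ and inserting $S^{*}(w)=\sum_{n}S^{*}_{n}w^{-n}$ and $Q(z)=\sum_{m\ge0}q_{m}z^{m}$ into \eqref{e:genfn}, the coefficient of $w^{-n}z^{m}$ on the left is $S^{*}_{n}q_{m}$, while on the right it is $q_{m}S^{*}_{n}+(1-t)\sum_{k=1}^{m}q_{m-k}S^{*}_{n-k}$ (the upper limit is $m$ because $q_{m-k}=0$ for $k>m$). This is precisely \eqref{e:com1}. Finally, for \eqref{e:com2} I would take the adjoint of \eqref{e:com1} under $*$: using that $(S^{*}_{n})^{*}=S_{n}$ and $(q_{m})^{*}=q^{*}_{m}$ as multiplication-versus-differential dual pairs, that $t$ is a scalar fixed by $*$, and that $*$ reverses products, the relation $S^{*}_{n}q_{m}=q_{m}S^{*}_{n}+(1-t)\sum_{k}q_{m-k}S^{*}_{n-k}$ transforms term-by-term into $q^{*}_{m}S_{n}=S_{n}q^{*}_{m}+(1-t)\sum_{k}S_{n-k}q^{*}_{m-k}$.

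The main obstacle I anticipate is purely bookkeeping rather than conceptual: justifying the normal-ordering identity $e^{A}e^{B}=e^{[A,B]}e^{B}e^{A}$ in this infinite-dimensional setting (i.e.\ checking that the commutator is genuinely a scalar and that the formal power-series manipulations converge in the appropriate graded sense), and correctly tracking the sign of the exponent $t$ versus $t^{-1}$ as well as the direction of the inequality in the summation limits. The duality step also requires care in confirming that $*$ sends $S^{*}_{n}$ to $S_{n}$ and $q_{m}$ (as a multiplication operator) to $q^{*}_{m}$, which follows from \eqref{e:Schurop}--\eqref{e:Schurop*} and \eqref{e:qop}--\eqref{e:qop*} together with the defining adjoint property of $p_{n}^{*}=n\frac{\partial}{\partial p_{n}}$.
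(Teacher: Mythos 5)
Your proof is correct and is essentially the paper's own argument: the paper likewise establishes the normal-ordered identities $S^*(z)Q(w)=Q(w)S^*(z)\frac{z-tw}{z-w}$ and $Q^*(w)S(z)=S(z)Q^*(w)\frac{w-tz}{w-z}$ by ``the usual vertex operator calculus'' and then compares coefficients. The only cosmetic difference is that you obtain the second relation by taking adjoints of the first rather than repeating the normal-ordering computation, which is a valid shortcut given that $*$ is an anti-involution fixing $t$.
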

\begin{proof} The usual vertex operator calculus gives that
\begin{align}
\label{e:relations1}
S^*(z)Q(w)&=Q(w)S^*(z)\frac{z-tw}{z-w},
\\ \label{e:relations2}
Q^{*}(w)S(z)&=S(z)Q^{*}(w)\frac{w-tz}{w-z},
\end{align}
where the rational functions are expanded at $w=0$ and $z=0$ respectively. The relations then follow by comparing coefficients of $z^{-n}w^{m}$ in \eqref{e:relations1} and $z^{n}w^{-m}$ in \eqref{e:relations2} respectively.
\end{proof}

For two compositions $\lambda,\mu,$ we say $\lambda\subset \mu$ if $\lambda_{i}\leq \mu_{i}$ for all $i\geq1$. In this case,
we write $\lambda-\mu=(\lambda_{1}-\mu_{1},\lambda_{2}-\mu_{2},\ldots)\vDash |\la|-|\mu|$. For each partition $\la=(\la_1, \ldots, \la_l)$, we define that
\begin{align}
\la^{[i]}=(\la_{i+1}, \cdots, \la_l), \qquad i=0, 1, \ldots, l
\end{align}
So $\la^{[0]}=\la$ and $\la^{[l]}=\emptyset$.
Next, we give the main results.

\begin{thm}\label{t:iterative}
For partitions $\lambda,\mu\vdash n$ and integer number $k$,
\begin{align}\label{e:qS}
q_{k}^{*}S_{\lambda}.1&=\sum\limits_{\tau\models k}(1-t)^{l(\tau)}S_{\lambda-\tau}.1\\ \label{e:Sq}
S^{*}_{k}q_{\mu}&=\sum_{\mbox{\tiny$\begin{array}{c}
\tau\subset \mu\\ |\tau|\geq k 
\end{array}$}}(1-t)^{l(\tau)}q_{\mu-\tau}S^{*}_{k-\mid \tau \mid}.1
=\sum\limits_{i=k}^{n}\sum\limits_{\tau\in \mathcal{C}^{\mu}_{i}}(1-t)^{l(\tau)}q_{\mu-\tau}S^{*}_{k-i}.1,
\end{align}
where $\mathcal{C}^{\mu}_{k}\triangleq\{\tau\models k\mid \tau\subset\mu\}.$
\end{thm}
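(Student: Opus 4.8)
The plan is to establish both identities by commuting the dual operator standing on the left all the way to the right through the product, using only the two relations \eqref{e:com1}--\eqref{e:com2} of Proposition \ref{Up Down} together with the vacuum evaluations $q_m^*.1=\delta_{m,0}$, $S^*_m.1=\delta_{m,0}$ for $m\ge0$, and the conventions $q_n=0=q_n^*$ for $n<0$. In each case this is a one-variable induction on the number of factors.

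For \eqref{e:qS} I write $S_\lambda.1=S_{\lambda_1}\cdots S_{\lambda_l}.1$ and push $q_k^*$ rightward one factor at a time by \eqref{e:com2}. When a $q_m^*$ crosses a single $S_{\lambda_i}$ it either passes through unchanged or deposits an amount $j\in\{1,\dots,m\}$, sending $S_{\lambda_i}\mapsto S_{\lambda_i-j}$, lowering the index of the dual operator by $j$, and producing exactly one factor $(1-t)$. Recording the deposits as a vector $\tau=(\tau_1,\dots,\tau_l)$ of nonnegative integers, the resulting term is $(1-t)^{\#\{i:\tau_i>0\}}\,S_{\lambda-\tau}\,q^*_{k-|\tau|}.1$, and since $l(\tau)$ is by definition the number of nonzero parts the prefactor is exactly $(1-t)^{l(\tau)}$. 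Finally $q^*_{k-|\tau|}.1=\delta_{k-|\tau|,0}$ forces $|\tau|=k$, i.e. $\tau\models k$, which collapses the sum to the stated right-hand side. The rigorous version is a direct induction on $l$: strip off $S_{\lambda_1}$ via \eqref{e:com2} and apply the inductive hypothesis to the shorter product $S_{\lambda_2}\cdots S_{\lambda_l}.1$.

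Identity \eqref{e:Sq} runs on the dual mechanism. Writing $q_\mu=q_{\mu_1}\cdots q_{\mu_l}.1$ and commuting $S_k^*$ rightward through the product by \eqref{e:com1}, each crossing of a factor $q_{\mu_i}$ either passes through or deposits $\tau_i\in\{1,\dots,\mu_i\}$, sending $q_{\mu_i}\mapsto q_{\mu_i-\tau_i}$, lowering the index of $S^*$ by $\tau_i$, and contributing one factor $(1-t)$; since $q_n=0$ for $n<0$ only deposits with $\tau_i\le\mu_i$ survive, so the admissible vectors are precisely the $\tau\subset\mu$. After $S^*_k$ has passed all $l$ factors we obtain $\sum_{\tau\subset\mu}(1-t)^{l(\tau)}q_{\mu-\tau}S^*_{k-|\tau|}.1$. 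Here lies the one genuine difference from the first case: $S^*$ is \emph{not} annihilated on the vacuum unless its index is nonnegative. Thus $S^*_{k-|\tau|}.1=\delta_{k-|\tau|,0}$ kills every term with $|\tau|<k$, whereas for $|\tau|>k$ the factor $S^*_{k-|\tau|}.1$ survives as a nonzero symmetric function and must be kept. This is exactly the restriction $|\tau|\ge k$ in the first displayed sum; regrouping the survivors by the value $i=|\tau|$ and using $\mathcal C^\mu_i=\{\tau\models i:\tau\subset\mu\}$ together with $|\tau|\le|\mu|=n$ gives the second expression.

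The calculations are routine once the two commutation relations are available, so I expect the only real care to be combinatorial. The point to watch is that a single crossing contributes $(1-t)$ \emph{once} regardless of the size of the deposit, so the total exponent of $(1-t)$ is the number of nonzero parts $l(\tau)$ and not $|\tau|$; getting this exponent right and correctly delimiting the range of $\tau$ is the substance of the argument. In particular, the contrast between the vanishing of $q^*_m.1$ for $m>0$ used to pin down $|\tau|=k$ in \eqref{e:qS} and the survival of $S^*_m.1$ for $m<0$ that yields the weaker constraint $|\tau|\ge k$ in \eqref{e:Sq} is the main thing to get right, with everything else reducing to a clean induction on the number of factors.
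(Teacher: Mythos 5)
Your proof is correct and follows essentially the same route as the paper: both identities are obtained by commuting the dual operator through the product one factor at a time via Proposition \ref{Up Down}, formalized as an induction on the number of factors, with the vacuum evaluations $q^*_m.1=\delta_{m,0}$ and $S^*_m.1=\delta_{m,0}$ ($m\ge0$) pinning down the admissible $\tau$. You correctly isolate the one asymmetry — $S^*_{k-|\tau|}.1$ survives for $|\tau|>k$ while $q^*_{k-|\tau|}.1$ does not — which is exactly why \eqref{e:Sq} carries the weaker constraint $|\tau|\ge k$; the paper proves \eqref{e:qS} explicitly and leaves \eqref{e:Sq} as ``shown similarly.''
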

\begin{proof}
We argue by induction on $l(\lambda)$ for the first relation. The initial step is clear. Assume that
\eqref{e:qS} holds for any partition with length $<l(\lambda)$,
it follows from Proposition \ref{Up Down} that
\begin{align*}
q_{k}^{*}S_{\lambda}&=S_{\lambda_{1}}q^{*}_{k}S_{\lambda_{2}}\cdots S_{\lambda_{l}}.1+(1-t)\sum\limits_{i=1}^{k}S_{\lambda_{1}-i}q^*_{k-i}S_{\lambda_{2}}\cdots S_{\lambda_{l}}.1\\
&=\sum\limits_{\tau\models k}(1-t)^{l(\tau)}S_{\lambda_{1}}S_{\lambda^{[1]}-\tau}.1+\sum\limits_{i=1}^{k}\sum\limits_{\tau\models k-i}(1-t)^{l(\tau)+1}S_{\lambda_{1}-i}S_{\lambda^{[1]}-\tau}.1\\
&=\sum\limits_{\tau\models k}(1-t)^{l(\tau)}S_{\lambda-\tau}.1.
\end{align*}
The other relation \eqref{e:Sq} is shown similarly.
\end{proof}

\begin{exmp}
Let $\lambda=(321), \mu=(2^21^2)$ be two partitions. 
\begin{align*}
g^{\lambda}_{\mu}(t)&=\langle q_{2}q_{2}q_{1}q_{1}, S_{3}S_{2}S_{1}.1 \rangle\\
&=(1-t)^{2}\langle q_{2}q_{1}q_{1}, S_{3}S_{1}.1+S_{2}S_{2}.1+S_{2}S_{1}S_{1}.1 \rangle\\
&=2(1-t)^4\langle q_{1}q_{1}, S_{1}S_{1}.1+S_{2}.1 \rangle\\
&=4(1-t)^5\langle q_{1}, S_{1}.1 \rangle\\
&=4(1-t)^6.
\end{align*}
\end{exmp}

To compute the characters of the Hecke algebra, we collect some simple facts (see \cite{JL}).
\begin{lem}\label{l:z} For partitions $\lambda,\mu \vdash n,$
\begin{align}\label{e:q.1}
\sum\limits_{\lambda \vdash n}\frac{(-1)^{l(\lambda)}}{z_{\lambda}(t)}&=
\begin{cases}
t^n-t^{n-1}&\text{if $n\geq 1$}\\
1&\text{if $n=0$}
\end{cases}
\\ \label{e:S.1}
\sum\limits_{\lambda \vdash n}\frac{1}{z_{\lambda}(t)}&=
\begin{cases}
1-t&\text{if $n\geq 1$}\\
1&\text{if $n=0$}
\end{cases}
\\ \label{e:X(1^n)}
{\chi}^{(1^n)}_{\mu}(1)&=\langle p_{\mu}, S_{(1^n)}.1 \rangle=(-1)^{n-l(\mu)}
\\ \label{e:X(n)}
{\chi}^{(n)}_{\mu}(1)&=\langle p_{\mu}, S_n.1 \rangle=1.
\end{align}
\end{lem}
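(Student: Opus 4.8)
The plan is to establish the four identities by extracting coefficients from the generating-function identities already available in the excerpt, together with the Frobenius-type character formula \eqref{e:characters}. Let me describe each piece in turn.

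First I would prove \eqref{e:S.1}. The key observation is that $\sum_{\lambda\vdash n}\frac{1}{z_\lambda(t)}$ is exactly the coefficient in the power-sum expansion of $q_n$, since the excerpt already records $q_n=\sum_{\lambda\vdash n}\frac{1}{z_\lambda(t)}p_\lambda$. Hence $\sum_{\lambda\vdash n}\frac{1}{z_\lambda(t)}=\langle q_n,p_{(1^n)}\rangle/\langle p_{(1^n)},p_{(1^n)}\rangle$ is obtained by evaluating the symmetric function $q_n$ at the specialization $p_r\mapsto 1$ for all $r$. Feeding $p_r=1$ into the defining exponential \eqref{e:qop} gives $\exp\!\left(\sum_{r\geq 1}\frac{1-t^r}{r}z^r\right)=\exp\!\left(-\log(1-z)+\log(1-tz)\right)=\frac{1-tz}{1-z}$; reading off the coefficient of $z^n$ yields $1-t$ for $n\geq 1$ and $1$ for $n=0$, which is precisely \eqref{e:S.1}. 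For \eqref{e:q.1} I would run the identical argument on $S^*_{-n}.1=\sum_{\lambda\vdash n}\frac{(-1)^{l(\lambda)}}{z_\lambda}p_\lambda$ (after replacing $z_\lambda$ by $z_\lambda(t)$), so that the relevant specialization of the generating series is $\exp\!\left(-\sum_{r\geq 1}\frac{1-t^r}{r}z^r\right)=\frac{1-z}{1-tz}$; expanding $\frac{1-z}{1-tz}=\sum_{n\geq 0}(t^n-t^{n-1})z^n$ for $n\geq 1$ (with constant term $1$) gives \eqref{e:q.1}.

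Next I would handle the two character evaluations. For \eqref{e:X(n)}, note $S_n.1=s_{(n)}=h_n$, and $\langle p_\mu,h_n\rangle=1$ for every $\mu\vdash n$ by the classical identity $h_n=\sum_{\mu\vdash n}z_\mu^{-1}p_\mu$ paired against the orthogonality \eqref{e:inner}; equivalently this is the $q\to 1$ (equivalently $t\to 0$) specialization of \eqref{e:characters} onto the trivial character. For \eqref{e:X(1^n)}, one has $S_{(1^n)}.1=s_{(1^n)}=e_n=\sum_{\mu\vdash n}\frac{\varepsilon_\mu}{z_\mu}p_\mu$ with sign $\varepsilon_\mu=(-1)^{n-l(\mu)}$, so pairing against $p_\mu$ again picks out the single coefficient $(-1)^{n-l(\mu)}$. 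In both cases the statement that $\chi^{(n)}_\mu(1)$ and $\chi^{(1^n)}_\mu(1)$ equal these pairings is just the specialization of \eqref{e:chi} at $q=1$ (so $t=q^{-1}\to 1$ and the normalizing factor $\frac{q^{|\mu|}}{(q-1)^{l(\mu)}}$ reduces the Hall--Littlewood $q_\mu$ to the power sum $p_\mu$), which recovers the ordinary symmetric-group character in the one-row and one-column cases.

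The computations here are all routine coefficient extractions, so there is no serious obstacle; the only point requiring care is the normalization in passing from $\chi^\lambda_\mu(q)$ to the classical value at $q=1$. Concretely, one must verify that $\frac{q^{|\mu|}}{(q-1)^{l(\mu)}}q_\mu(q^{-1})$ degenerates to $p_\mu$ as $q\to 1$, i.e. that $\tilde q_r(q)\to p_r$; this follows because $\tilde q_r(t)=\frac{t^r}{t-1}q_r(t^{-1})$ and $q_r(t^{-1})=\frac{1-t^{-r}}{r}p_r+\cdots$ has leading term producing $p_r$ after the prefactor cancels the pole at $t=1$. Once this limit is pinned down, identities \eqref{e:X(1^n)} and \eqref{e:X(n)} follow immediately from the expansions of $e_n$ and $h_n$ in the power-sum basis, and \eqref{e:q.1}--\eqref{e:S.1} follow from the two explicit generating functions above.
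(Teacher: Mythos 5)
Your argument is correct. The paper itself gives no proof of Lemma \ref{l:z} (it only cites \cite{JL}), and the generating-function computation you describe is the standard one that such a reference would contain: \eqref{e:q.1} and \eqref{e:S.1} follow by setting $p_r=1$ in $Q(z)^{\pm 1}$ and reading off the coefficients of $\frac{1-tz}{1-z}$ and $\frac{1-z}{1-tz}$, while \eqref{e:X(1^n)} and \eqref{e:X(n)} are the power-sum expansions of $e_n$ and $h_n$ paired against $p_\mu$ via \eqref{e:inner}; your observation that $\tilde q_r(q)\to p_r$ as $q\to 1$ correctly settles the normalization linking $\chi^\lambda_\mu(1)$ to $\langle p_\mu,s_\lambda\rangle$. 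One slip worth fixing: $\sum_{\lambda\vdash n}z_\lambda(t)^{-1}$ is \emph{not} equal to $\langle q_n,p_{(1^n)}\rangle/\langle p_{(1^n)},p_{(1^n)}\rangle$ --- that ratio is $1/z_{(1^n)}(t)=(1-t)^n/n!$, i.e.\ only the $\lambda=(1^n)$ term; the correct inner-product formulation of the specialization $p_r\mapsto 1$ is $\langle q_n,h_n\rangle$. This does not damage the proof, since the coefficient extraction you actually perform uses the specialization directly rather than that formula.
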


The following formulas  are well-known \cite{Ram}. They are also obtained as special cases in our formulas for the
hook and two-row cases. 
\begin{prop} For $\mu\vdash n$, one has that
\begin{align}\label{e:onerow}
\chi^{(n)}_{\mu}(q)&=q^{n-l(\mu)},\\ \label{e:onecolumn}
\chi^{(1^n)}_{\mu}(q)&=(-1)^{n-l(\mu)}.
\end{align}
\end{prop}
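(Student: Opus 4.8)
The plan is to deduce both identities from the master formula \eqref{e:chi}, which reduces everything to evaluating $g^{\la}_{\mu}(t)=\langle q_{\mu}(t), S_{\la}.1\rangle$ for $\la=(n)$ and $\la=(1^n)$. The engine is the adjunction $\langle q_{\mu_1}u,v\rangle=\langle u, q^{*}_{\mu_1}v\rangle$, which lets me move the leftmost factor of $q_{\mu}(t)$ across the pairing as the dual operator $q^{*}_{\mu_1}$ acting on $S_{\la}.1$. Peeling the parts $\mu_1,\mu_2,\dots$ off one at a time, the whole computation then hinges on how $q^{*}_k$ acts on the two distinguished vectors $S_{(n)}.1=h_n$ and $S_{(1^n)}.1=e_n$.

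For the one-row case $\la=(n)$ I would apply Theorem \ref{t:iterative} directly. Since $(n)$ has length one, the only weak composition of $k$ contributing to \eqref{e:qS} is the single-part one $\tau=(k)$, so $q^{*}_k S_{(n)}.1=(1-t)S_{n-k}.1$. Peeling $\mu_1,\mu_2,\dots$ in turn is legitimate because $\mu$ is weakly decreasing, so each part is at most the remaining degree; each step contributes one factor $(1-t)$ and the process terminates at $S_0.1=1$. Hence $g^{(n)}_{\mu}(t)=(1-t)^{l(\mu)}$. Substituting $t=q^{-1}$ into \eqref{e:chi} and using $1-q^{-1}=(q-1)/q$ collapses the prefactor and yields $\chi^{(n)}_{\mu}(q)=q^{\,n-l(\mu)}$, which is \eqref{e:onerow}.

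The one-column case $\la=(1^n)$ is the genuine obstacle. Applying \eqref{e:qS} to $S_{(1^n)}.1$ is no longer a single term: the weak compositions $\tau$ of $k$ produce many summands $S_{(1^n)-\tau}.1$ whose straightening (via the rule $S_{\mu}.1=0$ or $\pm s_{\la}$ recalled above) contributes signs arising from terms such as $S_0.1=1$ and $S_{-1}.1=0$. The cleanest route is to bypass this bookkeeping and establish the companion identity
\[
q^{*}_k e_n=(1-t)(-t)^{k-1}e_{n-k}\qquad(k\ge 1),
\]
which plays the role that $q^{*}_k h_n=(1-t)h_{n-k}$ played above. I would obtain it from the vertex-operator contraction $Q^{*}(w)E(z)=E(z)Q^{*}(w)\,\tfrac{w+z}{w+tz}$, where $E(z)=\sum_n e_n z^n$; applying both sides to $1$, using $Q^{*}(w).1=1$, and reading off the coefficient of $w^{-k}z^{n}$ in $\tfrac{w+z}{w+tz}=1+\sum_{k\ge1}(1-t)(-t)^{k-1}(z/w)^k$ gives the claim. (Equivalently one may sum the signed straightened contributions of \eqref{e:qS} directly; the two computations amount to the same identity.)

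Peeling the parts of $\mu$ with this recursion then gives $g^{(1^n)}_{\mu}(t)=\prod_i (1-t)(-t)^{\mu_i-1}=(-1)^{\,n-l(\mu)}(1-t)^{l(\mu)}t^{\,n-l(\mu)}$, the base case being $\langle 1,e_0\rangle=1$. Setting $t=q^{-1}$ in \eqref{e:chi} once more cancels the prefactor and leaves $\chi^{(1^n)}_{\mu}(q)=(-1)^{\,n-l(\mu)}$, establishing \eqref{e:onecolumn}. The only real work is the single contraction giving $q^{*}_k e_n$; once that clean recursion is in hand, both formulas drop out by the identical peeling argument, and the main obstacle—the sign-laden straightening for the column shape—is sidestepped entirely.
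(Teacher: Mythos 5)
Your proof is correct, but it takes a different route from the paper. The paper does not prove this proposition directly: it cites Ram and observes that both identities drop out as the extreme cases $k=n$ and $k=1$ of the hook formula \eqref{e:hook} (using $a_n(\mu;q)=(-1)^{l(\mu)}q^{-l(\mu)}$ and the identity $\sum_{i=0}^n a_i(\mu;q)q^i=0$ from Lemma \ref{l:ab}), that formula itself being proved by moving $S_{\lambda_1}^*$ onto $q_\mu$ and expanding everything in power sums via \eqref{e:q.1}--\eqref{e:X(n)}. You instead move the $q_{\mu_i}^*$ onto $S_\lambda.1$ and work entirely at the level of the recursions $q_k^*h_n=(1-t)h_{n-k}$ and $q_k^*e_n=(1-t)(-t)^{k-1}e_{n-k}$; the first is an immediate special case of \eqref{e:qS}, and the second, which you correctly derive from the contraction $Q^*(w)E(z)=E(z)Q^*(w)\frac{w+z}{w+tz}$, is not stated in the paper but is consistent with Corollary \ref{t:det} (a $k$-border strip inside a single column has $r(\xi)=k$, giving weight $(1-t)(-t)^{k-1}$). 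Your argument is more self-contained and elementary --- it needs neither the hook formula nor the $a_i$ generating functions --- at the cost of one extra vertex-operator contraction; the paper's route gets the proposition for free once the (harder) hook formula is established. All the individual steps check out: the peeling via adjointness, the coefficient extraction of $\frac{w+z}{w+tz}$, and the final cancellation of $(1-q^{-1})^{l(\mu)}$ against the prefactor $q^{n}(q-1)^{-l(\mu)}$ in \eqref{e:chi}.
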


For $\mu\vdash n$ and $1\leq i\leq n$, we introduce two sequences of polynomials respectively in $t$ and $t^{-1}$:
\begin{align}\label{e:ai}
a_i(\mu;t)&=(1-t)^{-l(\mu)}\sum\limits_{\tau\in \mathcal{C}^{\mu}_{i}}(1-t)^{l(\mu-\tau)}(1-t^{-1})^{l(\tau)},\\ \label{e:bi}
b_i(\mu;t)&=(1-t^{-1})^{-l(\mu)}\sum\limits_{\tau\in \mathcal{C}^{\mu}_{i}}(1-t^{-1})^{l(\mu-\tau)+l(\tau)}.
\end{align}

They are fixed by the generating functions shown below.
\begin{lem} \label{l:ab} For a partition $\mu=(\mu_1, \ldots, \mu_r)$, we have that
\begin{align}\label{e:av}
(1-t^{-1}v)^r\prod_{i=1}^r(1+v+\cdots+v^{\mu_i-1})&=\sum_{i=0}^{|\mu|}a_i(\mu, t)v^i\\ \label{e:bv}
\prod_{i=1}^r(t^{-1}+v^{\mu_i}+(1-t^{-1})(1+v+\cdots+v^{\mu_i-1}))&=\sum_{i=0}^{|\mu|}b_i(\mu, t)v^i
\end{align}
\end{lem}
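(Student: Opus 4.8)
The plan is to verify Lemma \ref{l:ab} by directly computing the generating functions $\sum_i a_i(\mu,t)v^i$ and $\sum_i b_i(\mu,t)v^i$ from the definitions \eqref{e:ai} and \eqref{e:bi}, and recognizing them as the claimed products. The key observation is that $\mathcal{C}^{\mu}_i = \{\tau\models i \mid \tau\subset\mu\}$ consists of compositions $\tau=(\tau_1,\ldots,\tau_r)$ with $0\le\tau_j\le\mu_j$ for each $j$ and $\sum_j\tau_j=i$, so that summing over all $i$ and weighting by $v^i$ factors the sum over the independent choices of each $\tau_j$. Concretely, I would write
\begin{align*}
\sum_{i=0}^{|\mu|}\Big(\sum_{\tau\in\mathcal{C}^{\mu}_i}(1-t)^{l(\mu-\tau)}(1-t^{-1})^{l(\tau)}\Big)v^i
=\prod_{j=1}^r\Big(\sum_{\tau_j=0}^{\mu_j}c(\tau_j,\mu_j)v^{\tau_j}\Big),
\end{align*}
where $c(\tau_j,\mu_j)$ records the local contribution of the $j$-th part to the length exponents. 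The main subtlety is bookkeeping the lengths: $l(\mu-\tau)$ counts indices $j$ with $\tau_j<\mu_j$ (equivalently $\mu_j-\tau_j\ge 1$), while $l(\tau)$ counts indices $j$ with $\tau_j\ge 1$, so each factor in the product must correctly encode which of the three regimes ($\tau_j=0$, $1\le\tau_j\le\mu_j-1$, $\tau_j=\mu_j$) applies.

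For the $a_i$ identity, I would compute the $j$-th local factor by splitting on $\tau_j$. When $\tau_j=0$ the contribution is $(1-t)$ (from $l(\mu-\tau)$ gaining one, $l(\tau)$ unchanged); when $1\le\tau_j\le\mu_j-1$ both lengths increase, giving $(1-t)(1-t^{-1})v^{\tau_j}$; and when $\tau_j=\mu_j$ only $l(\tau)$ increases, giving $(1-t^{-1})v^{\mu_j}$. Summing the local factor and dividing out the overall $(1-t)^{-l(\mu)}=(1-t)^{-r}$ prefactor from \eqref{e:ai}, the $(1-t)$ from the first two regimes cancels and I expect each factor to collapse to
\begin{align*}
1+(1-t^{-1})(v+\cdots+v^{\mu_j-1})+(1-t^{-1})\frac{1}{1-t}v^{\mu_j}.
\end{align*}
Reconciling this with the target factor $(1-t^{-1}v)(1+v+\cdots+v^{\mu_j-1})$ in \eqref{e:av} is the crux: one checks that $(1-t^{-1}v)(1+\cdots+v^{\mu_j-1})=1+(1-t^{-1})(v+\cdots+v^{\mu_j-1})-t^{-1}v^{\mu_j}$, so the two agree precisely when the $v^{\mu_j}$ coefficients match. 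The $b_i$ identity follows by the same factorization, now with prefactor $(1-t^{-1})^{-r}$ and with $l(\mu-\tau)+l(\tau)$ in the exponent, where the three regimes yield the respective local terms $(1-t^{-1})$, $(1-t^{-1})^2 v^{\tau_j}$, and $(1-t^{-1})v^{\mu_j}$, matching the factor $t^{-1}+v^{\mu_j}+(1-t^{-1})(1+\cdots+v^{\mu_j-1})$ in \eqref{e:bv} after dividing by $(1-t^{-1})$.

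The step I expect to be the main obstacle is getting the $v^{\mu_j}$-coefficient bookkeeping exactly right, since the boundary case $\tau_j=\mu_j$ behaves differently from the interior and must be tracked separately; a careless treatment there produces a spurious factor of $(1-t)$ or $(1-t^{-1})$ and breaks the match with the product formula. Everything else is a routine factorization of a multivariate sum into a product over the parts, justified by the independence of the constraints $0\le\tau_j\le\mu_j$ across $j$. I would therefore present the argument as a single clean computation of one generic factor, invoke the product structure, and conclude that the total generating functions coincide with the right-hand sides of \eqref{e:av} and \eqref{e:bv}.
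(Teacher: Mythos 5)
Your proposal is correct and follows essentially the same route as the paper: factor the generating function over the independent choices of $\tau_j\in\{0,1,\ldots,\mu_j\}$, compute the local factor in the three regimes $\tau_j=0$, $1\le\tau_j\le\mu_j-1$, $\tau_j=\mu_j$, and match it with $(1-t^{-1}v)[\mu_j]_v$ resp.\ $t^{-1}+v^{\mu_j}+(1-t^{-1})[\mu_j]_v$ (the boundary check amounts to $\frac{1-t^{-1}}{1-t}=-t^{-1}$, which holds). The only cosmetic difference is that the paper runs the two identities in parallel via an auxiliary parameter $t'$ specialized to $t$ and $t^{-1}$, whereas you treat them separately.
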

\begin{proof} Let $c_i(\mu; t')=(1-t')^{-r}\sum\limits_{\tau\in \mathcal{C}^{\mu}_{i}}(1-t')^{l(\mu-\tau)}(1-t^{-1})^{l(\tau)}$.
Then 
\begin{align*}
(1-t')^r\sum_{i=0}^{|\mu|}c_i(\mu;t')v^i&=\sum_{k=0}^{|\mu|}\sum_{\tau\subset\mu, \tau\vDash k}(1-t')^{l(\mu-\tau)}(1-t^{-1})^{l(\tau)}v^k\\
&=\sum_{k=0}^{|\mu|}\sum_{\tau_1+\cdots+\tau_r=k}(1-t')^{r-\delta_{\tau_1,\mu_1}-\cdots -\delta_{\tau_r,\mu_r}}(1-t^{-1})^{r-\delta_{\tau_1,0}-\cdots-\delta_{\tau_r,0}}v^k\\
&=(1-t')^r\prod_{i=1}^r\left(\sum_{\tau_i=0}^{\mu_i}(1-t')^{-\delta_{\tau_i,\mu_i}}(1-t^{-1})^{1-\delta_{\tau_i,0}}v^{\tau_i}\right),
\end{align*}
which implies the lemma by noting that 
\begin{gather*}
1+\sum_{i=1}^{n-1}(1-t^{-1})v^i+\frac{1-t^{-1}}{1-t'}v^n=\begin{cases}
(1-t^{-1}v)[n]_v, & t'=t\\
t^{-1}+v^k+(1-t^{-1})[n]_v, & t'=t^{-1}\end{cases}.
\end{gather*}
\end{proof}

We remark that $a_0(\mu; t)=b_0(\mu; t)=1$ and for any $0\leq j\leq n=|\mu|$
\begin{align*}
a_j(\mu; t)&=(-1)^{l(\mu)}a_{n-j}(\mu;t^{-1}),\\
b_j(\mu;t)&=b_{n-j}(\mu;t).
\end{align*}

Now we give the irreducible character formulas labeled by hook and two-row partitions. 
\begin{thm}\label{t:hook}
For partition $\mu\vdash n$ and $k\geq 0$, we have
\begin{align}\label{e:hook}
\chi^{(k,1^{n-k})}_{\mu}(q)=(-1)^{n-k+l(\mu)}\sum\limits_{i=k}^{n}a_i(\mu;q)q^i
\end{align}
\end{thm}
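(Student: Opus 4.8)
The plan is to compute the hook character $\chi^{(k,1^{n-k})}_{\mu}(q)$ directly from the pairing formula \eqref{e:chi}, namely
\[
\chi^{\lambda}_{\mu}(q)=\frac{q^{|\mu|}}{(q-1)^{l(\mu)}}\langle q_{\mu}(q^{-1}), S_{\lambda}.1\rangle,
\]
by exploiting the special structure of the Bernstein vertex operator acting on a hook shape. The key observation is that a hook Schur vector $S_{(k,1^{n-k})}.1$ can be written using the commutation relation \eqref{e:relation}, which lets me peel off the leg of the hook. Concretely, I first note that $S_{(k,1^{n-k})}.1 = S_k S_1^{n-k}.1$, so it suffices to understand how the dual homogeneous operators $q^*$ interact with this specific product. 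Rather than attacking $\langle q_\mu, S_k S_1^{n-k}.1\rangle$ head on, I would move the $q$'s across the $S$'s using the fundamental intertwining identity \eqref{e:qS} of Theorem~\ref{t:iterative}, which governs exactly $q_k^* S_\lambda.1=\sum_{\tau\models k}(1-t)^{l(\tau)}S_{\lambda-\tau}.1$.

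The main computation is therefore to apply \eqref{e:qS} iteratively (once for each part $\mu_i$) and track which compositions $\tau\subset\mu$ survive. After commuting all the $q_{\mu_i}^*$ through, the pairing collapses to a sum over compositions $\tau$ with $\tau\subset\mu$, each weighted by $(1-t)^{l(\tau)}$ times the residual pairing $\langle 1, S_{(k,1^{n-k})-\tau}.1\rangle$; the latter is nonzero only when the remaining Schur vector reduces (up to sign, via Proposition statement (2)) to the empty partition. I expect the surviving terms to be precisely those governed by the index set $\mathcal{C}^{\mu}_i$ for $k\le i\le n$, which is exactly the combinatorial data packaged into $a_i(\mu;t)$ in \eqref{e:ai}. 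The factor $q^i$ and the overall sign $(-1)^{n-k+l(\mu)}$ should then emerge from bookkeeping: the power of $q$ records the total weight $i=|\tau|$ stripped from the arm, and the sign arises from straightening the hook's leg of length $n-k$ together with the $l(\mu)$ factors of $(q-1)$ in the denominator of \eqref{e:chi}, combined with the substitution $t=q^{-1}$ and the normalization $\tilde q_\mu(q)=\frac{q^{|\mu|}}{(q-1)^{l(\mu)}}q_\mu(q^{-1})$.

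The cleanest route is to first prove the identity with $g^{\lambda}_\mu(t)=\langle q_\mu(t), S_\lambda.1\rangle$ for the abstract variable $t$, reducing everything to the generating-function identity \eqref{e:av} of Lemma~\ref{l:ab}, and only at the end perform the substitution $t\mapsto q^{-1}$ together with the prefactor to land the stated normalization. In that formulation the weighted sum $\sum_{\tau\in\mathcal C^\mu_i}(1-t)^{l(\mu-\tau)}(1-t^{-1})^{l(\tau)}$ is literally $(1-t)^{l(\mu)}a_i(\mu;t)$, so matching my commuted-out sum to $a_i(\mu;t)$ should be immediate once the combinatorics line up.

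The step I expect to be the main obstacle is controlling the \emph{sign and vanishing} arising from straightening $S_{(k,1^{n-k})-\tau}.1$ into a genuine Schur function (or zero) via the Bernstein straightening rule in Proposition~(2). Because subtracting a composition $\tau$ from the hook can produce non-partition indices, I must argue carefully which $\tau$ yield a nonzero residual pairing with $1$, and that for those $\tau$ the straightening sign is uniformly $(-1)^{n-k}$ (the length of the leg), independent of $\tau$. Verifying this uniform-sign claim—most naturally by induction on the leg length $n-k$ using the anticommutation relation \eqref{e:relation} to move each $S_1$ past the arm operator $S_k$—is the delicate part; once it is established, the remaining algebra is the routine generating-function matching supplied by Lemma~\ref{l:ab} and the normalization in \eqref{e:chi}.
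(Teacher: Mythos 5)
Your overall strategy is genuinely different from the paper's, and as written it contains a gap that is not merely ``delicate'' but fatal to the stated plan. The paper never pushes the $q^*_{\mu_j}$'s through the hook operator at all: it dualizes in the opposite direction, writing $g^{(k,1^{n-k})}_{\mu}(t)=\langle S_k^*q_{\mu},S_{(1^{n-k})}.1\rangle$, applies the \emph{second} relation \eqref{e:Sq} of Theorem~\ref{t:iterative} (not \eqref{e:qS}), and then evaluates the residual pairing $\langle q_{\mu-\tau}S^*_{k-i}.1,S_{(1^{n-k})}.1\rangle$ by expanding everything in power sums and invoking \eqref{e:q.1}, \eqref{e:S.1}, \eqref{e:X(1^n)}. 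In that route the compositions $\tau\subset\mu$ of $\mathcal C^{\mu}_i$ arise for free (they record how much of each $\mu_j$ is absorbed by $S_k^*$), the sign $(-1)^{n-k}$ comes from $\langle p_{\nu},S_{(1^{n-k})}.1\rangle=(-1)^{n-k-l(\nu)}$, and no Bernstein straightening ever occurs. Lemma~\ref{l:ab} plays no role in the proof; it is only a computational repackaging of \eqref{e:ai}.

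The concrete gap in your route is the uniform-sign claim. If you iterate \eqref{e:qS}, the compositions you sum over are distributions of each $\mu_j$ across the $n-k+1$ rows of the hook, \emph{not} elements of $\mathcal C^{\mu}_i$, and the straightening signs of the surviving terms are not all equal to $(-1)^{n-k}$. Already for $n=3$, $k=2$, $\mu=(3)$ one has
\begin{align*}
q_3^*S_{(2,1)}.1&=(1-t)S_{(-1,1)}.1+(1-t)^2S_{(0,0)}.1+(1-t)^2S_{(1,-1)}.1+(1-t)S_{(2,-2)}.1\\
&=-(1-t)+(1-t)^2+0+0=-t(1-t),
\end{align*}
so the two surviving terms carry straightening signs $-1$ and $+1$ while $(-1)^{n-k}=-1$; the correct answer is produced precisely by the cancellation between terms of \emph{opposite} sign, which your argument would suppress. (The induction on leg length you sketch cannot rescue this, because the sign of $S_{(k,1^{n-k})-\tau}.1$ genuinely depends on how $\tau$ is distributed between the arm and the leg.) To make your route work you would need to classify all surviving distributions together with their signs and then resum into the $\mathcal C^{\mu}_i$-indexed form of \eqref{e:ai} --- a substantial combinatorial identity you have not supplied. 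The paper's dualization via \eqref{e:Sq} is designed exactly to avoid this.
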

\begin{proof} Let $l=l(\mu)$. By Theorem \ref{t:iterative} 
it follows that
\begin{align*}
&g^{(k,1^{n-k})}_{\mu}(t)=\langle q_{\mu}, S_{(k,1^{n-k})}.1 \rangle=\langle S_{k}^{*}q_{\mu}, S_{(1^{n-k})}.1 \rangle\\
=&\sum\limits_{i=k}^{n}\sum_{\mbox{\tiny$\begin{array}{c}
\mid \tau \mid=i\\
\tau\subset \mu\end{array}$}}(1-t)^{l(\tau)}\langle q_{\mu-\tau}S^*_{k-i}.1, S_{(1^{n-k})}.1 \rangle\\
=&\sum\limits_{i=k}^{n}\sum_{\mbox{\tiny$\begin{array}{c}
\mid \tau \mid=i\\
\tau\subset \mu\end{array}$}}(1-t)^{l(\tau)}\sum\limits_{\mu^{(1)}\vdash \mu_{1}-\tau_{1}, \cdots, \mu^{(l)}\vdash \mu_{l}-\tau_{l}}\sum\limits_{\lambda\vdash i-k}\frac{(-1)^{l(\lambda)}}{z_{\mu^{(1)}}(t)\cdots z_{\mu^{(l)}}(t)z_{\lambda}}
\langle p_{\mu^{(1)}\cup\cdots\cup\mu^{(l)}\cup\lambda}, S_{(1^{n-k})}.1 \rangle\\
=&\sum\limits_{i=k}^{n}\sum_{\mbox{\tiny$\begin{array}{c}
\mid \tau \mid=i\\
\tau\subset \mu\end{array}$}}(1-t)^{l(\tau)}\sum\limits_{\mu^{(1)}\vdash \mu_{1}-\tau_{1}}\cdots\sum\limits_{\mu^{(l)}\vdash \mu_{l}-\tau_{l}}\sum\limits_{\lambda\vdash i-k}\frac{(-1)^{n-k}(-1)^{l(\mu^{(1)})}\cdots(-1)^{l(\mu^{(l)})}}{z_{\mu^{(1)}}(t)\cdots z_{\mu^{(l)}}(t)z_{\lambda}}\\
=&(-1)^{n-k}\sum\limits_{i=k}^{n}\sum\limits_{\tau\in \mathcal{C}^{\mu}_{i}}t^{n-i}(1-t^{-1})^{l(\mu-\tau)}(1-t)^{l(\tau)},
\end{align*}
where we have used \eqref{e:q.1} and \eqref{e:S.1} in the last equation. The theorem follows by recalling \eqref{e:ai}
and $\chi_{\mu}^{\la}(q)=(-1)^{l(\mu)}q^{n}(1-q)^{-l(\mu)}g_{\mu}^{\la}(q^{-1})$.
\end{proof}

Lemma \ref{l:ab} offers practical way to compute \eqref{e:hook} by expanding the generating function up to
certain power of $v$ and then setting $v=q$.
\begin{exmp} Let $\lambda=(6,1^2),\mu=(2^4)$. Taking terms of $v$ with degree $\geq 6$ and then setting $v=q$,
\begin{align*}
(1-t^{-1}v)^4(1+v)^4&\equiv(1-t^{-1}v)^4(6v^2+4v^3+v^4)\\
&\equiv t^{-4}v^4\cdot 6v^2+(t^{-4}v^4-4t^{-3}v^3)4v^3+(t^{-4}v^4-4t^{-3}v^3+6t^{-2}v^2)v^4\\
&\equiv 6t^2-12t^3+3t^4.
\end{align*}
Therefore $\chi^{(6, 1^2)}_{(2^4)}(q)=6q^2-12q^3+3q^4$.
\end{exmp}

\begin{thm}\label{t:two}
For partition $\mu\vdash n$ and $k\geq n-k\geq 0$ we have
\begin{align}\label{e:two}
\chi^{(k,n-k)}_{\mu}(q)&=q^{n-l(\mu)}(b_k(\mu;q)-b_{k+1}(\mu;q)),\\
\sum_{i=0}^{[n/2]}\chi^{(n-i, i)}_{\mu}(q)&=q^{n-l(\mu)}b_{[n/2]}(\mu;q).
\end{align}
\end{thm}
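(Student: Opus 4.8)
The plan is to mirror the structure of the proof of Theorem \ref{t:hook}: first compute the auxiliary quantity $g^{(k,n-k)}_\mu(t)=\langle q_\mu, S_kS_{n-k}.1\rangle$, then pass to the character via the normalisation $\chi^\lambda_\mu(q)=(-1)^{l(\mu)}q^n(1-q)^{-l(\mu)}g^\lambda_\mu(q^{-1})$ already used above. Since $S_{(k,n-k)}.1=S_kS_{n-k}.1$ and $S^*_k$ is adjoint to $S_k$, I would start from
\begin{equation*}
g^{(k,n-k)}_\mu(t)=\langle S^*_kq_\mu,\, S_{n-k}.1\rangle
\end{equation*}
and substitute the commutation formula \eqref{e:Sq} of Theorem \ref{t:iterative}, namely $S^*_kq_\mu=\sum_{i=k}^{n}\sum_{\tau\in\mathcal C^\mu_i}(1-t)^{l(\tau)}q_{\mu-\tau}S^*_{k-i}.1$.

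Next I would expand each factor in the power-sum basis, writing $q_{\mu-\tau}=\prod_j\sum_{\mu^{(j)}\vdash\mu_j-\tau_j}z_{\mu^{(j)}}(t)^{-1}p_{\mu^{(j)}}$ and $S^*_{k-i}.1=\sum_{\nu\vdash i-k}(-1)^{l(\nu)}z_\nu^{-1}p_\nu$. The decisive difference from the hook case is that here one pairs against the one-row function $S_{n-k}.1$, so by \eqref{e:X(n)} every monomial $\langle p_\rho, S_{n-k}.1\rangle$ contributes exactly $1$ rather than a sign. Consequently the inner sum over $\nu$ survives intact as $\sum_{\nu\vdash i-k}(-1)^{l(\nu)}z_\nu^{-1}$, which by \eqref{e:q.1} evaluated at $t=0$ equals $\delta_{i,k}-\delta_{i,k+1}$. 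This is the heart of the argument: it collapses the outer sum over $i$ to just the two indices $i=k$ and $i=k+1$. For each of these, the product of the $\mu^{(j)}$-sums equals $(1-t)^{l(\mu-\tau)}$ by \eqref{e:S.1}, so I would arrive at
\begin{equation*}
g^{(k,n-k)}_\mu(t)=\sum_{\tau\in\mathcal C^\mu_k}(1-t)^{l(\tau)+l(\mu-\tau)}-\sum_{\tau\in\mathcal C^\mu_{k+1}}(1-t)^{l(\tau)+l(\mu-\tau)}.
\end{equation*}

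Recognising the right-hand side, via the defining identity \eqref{e:bi} after the substitution $t\mapsto t^{-1}$, as $(1-t)^{l(\mu)}\bigl(b_k(\mu;t^{-1})-b_{k+1}(\mu;t^{-1})\bigr)$, I would feed this into $\chi^{(k,n-k)}_\mu(q)=(-1)^{l(\mu)}q^n(1-q)^{-l(\mu)}g^{(k,n-k)}_\mu(q^{-1})$; the elementary identity $(-1)^{l(\mu)}(1-q)^{-l(\mu)}(1-q^{-1})^{l(\mu)}=q^{-l(\mu)}$ then gives the first formula $\chi^{(k,n-k)}_\mu(q)=q^{n-l(\mu)}\bigl(b_k(\mu;q)-b_{k+1}(\mu;q)\bigr)$. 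For the second formula I would put $k=n-i$ and sum over $0\le i\le[n/2]$; the right-hand side telescopes to $q^{n-l(\mu)}\bigl(b_{n-[n/2]}(\mu;q)-b_{n+1}(\mu;q)\bigr)$. Since $b_{n+1}(\mu;q)=0$ (because $\mathcal C^\mu_{n+1}=\emptyset$ as $|\mu|=n$) and the symmetry $b_j(\mu;q)=b_{n-j}(\mu;q)$ noted after Lemma \ref{l:ab} gives $b_{n-[n/2]}=b_{[n/2]}$, this reduces to $q^{n-l(\mu)}b_{[n/2]}(\mu;q)$.

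The main obstacle is really a single point demanding care rather than genuine difficulty: correctly identifying that pairing against the one-row $S_{n-k}.1$ leaves the alternating sum $\sum_{\nu\vdash i-k}(-1)^{l(\nu)}z_\nu^{-1}$ untouched, and that this sum vanishes for $i-k\ge 2$. It is exactly this vanishing that separates the two-row computation from the hook computation and forces the two-term difference $b_k-b_{k+1}$. The remaining work—tracking $l(\mu-\tau)$ and the powers of $1-t$, and handling the $t\mapsto t^{-1}$ normalisation relating $g$ to $\chi$—is routine, and the telescoping-plus-symmetry step for the summed identity is immediate once the first formula is established.
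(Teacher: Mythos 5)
Your proposal is correct and follows essentially the same route as the paper's own proof: move $S_k^*$ across $q_\mu$ via \eqref{e:Sq}, expand in the power-sum basis, use \eqref{e:X(n)}, \eqref{e:q.1} and \eqref{e:S.1} to collapse the sum over $i$ to the two terms $i=k,k+1$, and then normalise to pass from $g^{(k,n-k)}_\mu$ to $\chi^{(k,n-k)}_\mu$. The paper dismisses the summed identity with ``the second one is clear''; your telescoping argument together with $b_{n+1}=0$ and the symmetry $b_j(\mu;q)=b_{n-j}(\mu;q)$ is exactly the intended justification.
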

\begin{proof} By the same argument in the proof of Theorem \ref{t:hook} we see that $g^{(k,n-k)}_{\mu}(t)=\langle S_k^*q_{\mu}, S_{n-k}.1\rangle$
is equal to 
\begin{align*}
&\sum\limits_{i=k}^{n}\sum\limits_{\tau\in\mathcal{C}^{\mu}_{i}}(1-t)^{l(\tau)}\sum\limits_{\mu^{(1)}\vdash \mu_{1}-\tau_{1}, \cdots, \mu^{(l)}\vdash \mu_{l}-\tau_{l}}\sum\limits_{\lambda\vdash i-k}\frac{(-1)^{l(\lambda)}}{z_{\mu^{(1)}}(t)\cdots z_{\mu^{(l)}}(t)z_{\lambda}}\\
=&\sum\limits_{\tau\in\mathcal{C}^{\mu}_{k}}(1-t)^{l(\mu-\tau)+l(\tau)}-\sum\limits_{\tau\in\mathcal{C}^{\mu}_{k+1}}(1-t)^{l(\mu-\tau)+l(\tau)},
\end{align*}
which implies the first result by \eqref{e:bi}. The second one is clear.
\end{proof}

\begin{exmp}
Let $\lambda=(4,2),\mu=(3,2,1)$. By Theorem \ref{t:two} 
\begin{align*}
&\prod_{i=1}^3(t^{-1}+v^i+(1-t^{-1})[i]_v)-v^{-1}\prod_{i=1}^3(t^{-1}+v^i+(1-t^{-1})[i]_v)\\
&=(v-v^{-1})(t^{-1}+v^2+(1-t^{-1})(1+v))(t^{-1}+v^3+(1-t^{-1})(1+v+v^2)).
\end{align*}
The coefficient of $v^4$ is $(1-t^{-1})(2-t^{-1})$. Therefore
$$\chi_{(321)}^{(42)}(q)=q^3(b_4(q)-b_5(q))=q^3(2-3q^{-1}+q^{-2})=2q^3-3q^2+q.$$
\end{exmp}

Using the similar argument, we can compute the general character values.

For a partition-valued function $\underline{\mu}=(\mu^{(1)}, \ldots, \mu^{(r)})$, we define
\begin{equation}
z_{\underline{\mu}}(t)=z_{\mu^{(1)}}(t)\cdots z_{\mu^{(r)}}(t),
\end{equation}
and call $|\underline{\mu}|=\sum_i|\mu^{(i)}|$ the weight of $\underline{\mu}$ and $l(\underline{\mu})=\sum_il(\mu^{(i)})$ the length of $\underline{\mu}$.
Given a composition $\tau=(\tau_1, \ldots, \tau_r)$, we denote by $\underline{\mu}\vdash \tau$ the partition-valued function $\underline{\mu}=(\mu^{(1)}, \ldots, \mu^{(r)})$
such that $\mu^{(i)}\vdash \tau_i$. Clearly $|\underline{\mu}|=|\tau|$.

The following general formula expresses the Hecke algebra character in terms of irreducible characters of the symmetric groups of lower degrees.
\begin{thm}\label{t:gen} For $\la, \mu\vdash n$, the irreducible character $\chi_{\mu}^{\la}(q)$ is given by
\begin{align}
\chi^{\la}_{\mu}(q)=\frac1{(q-1)^{l(\mu)}}\sum_{i=\la_1}^nq^i\sum_{\mbox{\tiny$\begin{matrix}|\tau|=i\\
\tau\subset\mu\end{matrix}$}}(1-q^{-1})^{l(\tau)}
\sum_{{\underline\nu}\vdash \mu-\tau}\sum_{\rho\vdash i-\la_1}\frac{(-1)^{l(\underline{\nu})+l(\rho)}}
{z_{\underline{\nu}}(q)z_{\rho}}\chi^{\la^{[1]}}_{\underline{\nu}\cup\rho}.
\end{align}
\end{thm}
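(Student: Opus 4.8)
The plan is to combine the Frobenius-type character formula \eqref{e:chi} with the commutation relation \eqref{e:Sq} from Theorem \ref{t:iterative}, exactly mirroring the strategy already used in the proofs of Theorem \ref{t:hook} and Theorem \ref{t:two}, but now peeling off only the \emph{first} part $\lambda_1$ of $\lambda$ rather than reducing all the way to a one-row or two-row shape. Concretely, I would start from the identity $\chi^{\la}_{\mu}(q)=\frac{q^{|\mu|}}{(q-1)^{l(\mu)}}g^{\la}_{\mu}(q^{-1})$ with $g^{\la}_{\mu}(t)=\langle q_{\mu}(t), S_{\lambda}.1\rangle$, and rewrite $S_{\lambda}.1=S_{\lambda_1}S_{\lambda^{[1]}}.1$. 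Moving the $S_{\lambda_1}$ across the inner product turns it into its dual $S^{*}_{\lambda_1}$ acting on $q_{\mu}$, so that
\begin{align*}
g^{\la}_{\mu}(t)=\langle q_{\mu}(t), S_{\lambda_1}S_{\lambda^{[1]}}.1\rangle=\langle S^{*}_{\lambda_1}q_{\mu}(t), S_{\lambda^{[1]}}.1\rangle.
\end{align*}

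Next I would apply \eqref{e:Sq} to expand $S^{*}_{\lambda_1}q_{\mu}$ as $\sum_{i\geq\lambda_1}\sum_{\tau\in\mathcal C^{\mu}_i}(1-t)^{l(\tau)}q_{\mu-\tau}S^{*}_{\lambda_1-i}.1$. Because each remaining factor is a product $q_{\mu-\tau}$ of generalized homogeneous functions and $S^{*}_{\lambda_1-i}.1=\sum_{\rho\vdash i-\lambda_1}\frac{(-1)^{l(\rho)}}{z_{\rho}}p_{\rho}$ (from the displayed formula for $S^{*}_{-n}.1$, since $\lambda_1-i\leq 0$), I can expand everything into the power-sum basis: writing $q_{\mu-\tau}(t)=\sum_{\underline{\nu}\vdash\mu-\tau}\frac{1}{z_{\underline{\nu}}(t)}p_{\underline{\nu}}$ via $q_n=\sum_{\lambda\vdash n}z_\lambda(t)^{-1}p_\lambda$ applied componentwise. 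The key recognition step is that $\langle p_{\underline{\nu}\cup\rho}, S_{\lambda^{[1]}}.1\rangle$ is, up to the usual normalization, precisely a symmetric-group character value $\chi^{\lambda^{[1]}}_{\underline{\nu}\cup\rho}$, since $S_{\lambda^{[1]}}.1=s_{\lambda^{[1]}}$ and $\langle p_{\sigma}, s_{\nu}\rangle=\chi^{\nu}_{\sigma}$ is the classical Frobenius formula. Collecting the signs $(-1)^{l(\underline{\nu})}$ and $(-1)^{l(\rho)}$ and the factors $z_{\underline{\nu}}(t)^{-1}z_{\rho}^{-1}$ assembles the stated inner sum.

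Finally I would substitute $t=q^{-1}$ and restore the prefactor $q^{|\mu|}/(q-1)^{l(\mu)}$, being careful to track how $(1-t)^{l(\tau)}$ becomes $(1-q^{-1})^{l(\tau)}$ and how the power $q^i$ arises. The bookkeeping here is the only real subtlety: one must verify that the overall power of $q$ pulled out front combines with the $t\mapsto q^{-1}$ substitution to leave exactly $\frac{1}{(q-1)^{l(\mu)}}\sum_i q^i$, and that the $z_{\underline{\nu}}$ in the final formula is evaluated at $q$ (from $z_{\underline{\nu}}(t)$ with $t=q^{-1}$ after the homogeneity rescaling) while $z_{\rho}$ is the classical $z_{\rho}=z_{\rho}(0)$. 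I expect this power-of-$q$ reconciliation, together with confirming that the summation range $i=\lambda_1$ to $n$ and the constraint $\rho\vdash i-\lambda_1$ are compatible (so that $S^*_{\lambda_1-i}.1$ is nonzero exactly when $i\geq\lambda_1$), to be the main obstacle; the structural part of the argument is essentially a direct iteration of the already-proven Theorem \ref{t:iterative}.
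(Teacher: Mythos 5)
Your proposal is correct and follows essentially the same route as the paper: peel off $S_{\lambda_1}$ as $S^*_{\lambda_1}$ acting on $q_\mu$, expand via \eqref{e:Sq} and the power-sum expansions of $q_{\mu-\tau}$ and $S^*_{\lambda_1-i}.1$, and recognize the classical Frobenius characters against $S_{\lambda^{[1]}}.1$. The bookkeeping you flag as the main obstacle resolves exactly as the paper's last displayed equality, via $z_{\nu}(q^{-1})^{-1}=(-1)^{l(\nu)}q^{-|\nu|}z_{\nu}(q)^{-1}$ applied with $|\underline{\nu}|=n-i$, which simultaneously produces the sign $(-1)^{l(\underline{\nu})}$ and converts the prefactor $q^{n}$ into $q^{i}$.
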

\begin{proof} Let $l(\mu)=l$. As in the proof Theorem \ref{t:hook} we have that
\begin{align*}
&\chi^{\la}_{\mu}(q)=\frac{q^n}{(q-1)^l}\langle S_{\la_1}^{*}q_{\mu}, S_{\la^{[1]}}.1 \rangle\\
=&\frac{q^n}{(q-1)^l}\sum\limits_{i=\la_1}^{n}\sum_{\mbox{\tiny$\begin{array}{c}
\mid \tau \mid=i\\
\tau\subset \mu\end{array}$}}(1-q^{-1})^{l(\tau)}\sum_{\underline{\nu}\vdash \mu-\tau}\sum\limits_{\rho\vdash i-\la_1}\frac{(-1)^{l(\rho)}}{z_{\underline{\nu}}(q^{-1})z_{\rho}}\chi^{\la^{[1]}}_{\underline{\nu}\cup\rho}\\
=&\frac1{(q-1)^l}\sum\limits_{i=\la_1}^{n}\sum_{\mbox{\tiny$\begin{array}{c}
\mid \tau \mid=i\\
\tau\subset \mu\end{array}$}}q^i(1-q^{-1})^{l(\tau)}\sum_{\underline{\nu}\vdash \mu-\tau}\sum\limits_{\rho\vdash i-\la_1}\frac{(-1)^{l(\rho)+l(\underline{\nu})}}{z_{\underline{\nu}}(q)z_{\rho}}\chi^{\la^{[1]}}_{\underline{\nu}\cup\rho}.
\end{align*}
\end{proof}

We remark that one obtains another general formula for $\chi_{\mu}^{\la}(q)$ based on the transition matrix from the
elementary symmetric function $S_{-m}^*.1$ to the generalized symmetric functions $q_{\rho}$. Using the definition of $Q(z)$, it is easy to see that
\begin{equation}
S^*(tz).1=Q(z)S^*(z).1
\end{equation}
Then we have the generalized Newton's formula:
\begin{equation}
S^*_{-m}.1=\frac1{t^m-1}(q_1S^*_{-m+1}.1+q_2S^*_{-m+2}.1+\cdots q_{m-1}S^*_{-1}.1+q_m),
\end{equation}
which leads to the decomposition: $S_{-m}^*.1=\sum_{\rho\vdash m}C_{m,\rho}(t)q_{\rho}$. 
The first few terms are given by
\begin{align*}
S^*_{-1}.1&=\frac{q_1}{t-1},\\
S^*_{-2}.1&=\frac{q_1^2}{(t^2-1)(t-1)}+\frac{q_2}{t^2-1},\\
S^*_{-3}.1&=\frac{q_1^3}{(t^3-1)(t^2-1)(t-1)}+\frac{(t+2)q_2q_1}{(t^3-1)(t^2-1)}+\frac{q_3}{t^3-1},\\
S^*_{-4}.1&=\frac{q_1^4}{(t^4-1)(t^3-1)(t^2-1)(t-1)}+\frac{(t^2+t+2)q_3q_1}{(t^4-1)(t^3-1)}\\
&+\frac{(t^2+2t+3)q_2q_1^2}{(t^4-1)(t^3-1)(t^2-1)}+\frac{q_2^2}{(t^4-1)(t^2-1)}+\frac{q_4}{t^4-1}.
\end{align*}

The following result can be shown similarly as Theorem \ref{t:gen}.
\begin{thm} For $\la, \mu\vdash n$. Then the irreducible character $\chi^{\la}_{\mu}(q)$ is given by
\begin{align*}
\chi^{\la}_{\mu}(q)=\frac{q^{\la_1}}{(q-1)^{l(\mu)}}\sum_{i=\la_1}^n\sum_{\mbox{\tiny$\begin{matrix}|\tau|=i\\
\tau\subset\mu\end{matrix}$}}(1-q^{-1})^{l(\tau)}
\sum_{\rho\vdash i-\la_1}(q-1)^{l(\mu-\tau)+l(\rho)}C_{i-\la_1, \rho}(q^{-1})\chi^{\la^{[1]}}_{(\mu-\tau)^*\cup \rho}(q),
\end{align*}
where $C_{m,\rho}(t)$ is the transition coefficient from $S^*_{-m}.1$ to $q_{\rho}$ and $(\mu-\tau)^*$ is the rearranged partition obtained from $\mu-\tau$.
\end{thm}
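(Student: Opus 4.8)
The plan is to follow the proof of Theorem \ref{t:gen} line for line, replacing only the final expansion step. Starting from the Frobenius-type formula \eqref{e:chi} evaluated at the vertex-operator parameter $t=q^{-1}$, I would write
\[
\chi^{\la}_{\mu}(q)=\frac{q^n}{(q-1)^{l(\mu)}}\langle S_{\la_1}^{*}q_{\mu}, S_{\la^{[1]}}.1\rangle,
\]
and then push the operator $S^{*}_{\la_1}$ through $q_{\mu}$ using the commutation rule \eqref{e:Sq} of Theorem \ref{t:iterative}. This produces a sum over compositions $\tau\subset\mu$ with $|\tau|=i\geq\la_1$ of terms of the shape $(1-q^{-1})^{l(\tau)}\,q_{\mu-\tau}\,S^{*}_{-(i-\la_1)}.1$. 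Up to this point the computation is identical to that of Theorem \ref{t:gen}.

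The only genuine departure is how the residual operator $S^{*}_{-(i-\la_1)}.1$ acting on the vacuum is resolved. Rather than expanding it into power sums via $S^{*}_{-m}.1=\sum_{\rho\vdash m}(-1)^{l(\rho)}z_{\rho}^{-1}p_{\rho}$, I would substitute the expansion into the $q$-basis, $S^{*}_{-m}.1=\sum_{\rho\vdash m}C_{m,\rho}(t)\,q_{\rho}$, furnished by the generalized Newton formula displayed just before the statement, taking $m=i-\la_1$ and $t=q^{-1}$. Since the $q_n$ commute as multiplication operators, the product $q_{\mu-\tau}q_{\rho}$ depends only on the underlying partition; hence it equals $q_{(\mu-\tau)^{*}\cup\rho}$, where $(\mu-\tau)^{*}$ is the partition rearrangement of $\mu-\tau$. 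With this substitution the pairing collapses to $\langle q_{(\mu-\tau)^{*}\cup\rho}, S_{\la^{[1]}}.1\rangle=g^{\la^{[1]}}_{(\mu-\tau)^{*}\cup\rho}(q^{-1})$, a lower-degree quantity in the $g$-notation introduced after \eqref{e:chi}, whose total weight is $|\mu-\tau|+|\rho|=n-\la_1=|\la^{[1]}|$, as required for consistency.

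Next I would convert back to characters using $\chi^{\nu}_{\sigma}(q)=q^{|\sigma|}(q-1)^{-l(\sigma)}g^{\nu}_{\sigma}(q^{-1})$, the inverse of the relation recorded at the end of the proof of Theorem \ref{t:hook}, applied with $\nu=\la^{[1]}$ and $\sigma=(\mu-\tau)^{*}\cup\rho$ of length $l(\mu-\tau)+l(\rho)$. This replaces the $g$-value by $q^{-(n-\la_1)}(q-1)^{l(\mu-\tau)+l(\rho)}\chi^{\la^{[1]}}_{(\mu-\tau)^{*}\cup\rho}(q)$. Finally I would collect prefactors: the global $q^{n}/(q-1)^{l(\mu)}$ combines with the $q^{-(n-\la_1)}$ to give $q^{\la_1}/(q-1)^{l(\mu)}$, while the surviving $(q-1)^{l(\mu-\tau)+l(\rho)}$, $(1-q^{-1})^{l(\tau)}$, and $C_{i-\la_1,\rho}(q^{-1})$ factors assemble into exactly the claimed expression.

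I expect no structural obstacle here, since the argument reuses Theorem \ref{t:iterative} and the transition coefficients $C_{m,\rho}$ without any new estimate. The only thing to watch is the bookkeeping: keeping the two evaluations straight (the vertex-operator parameter $t=q^{-1}$ versus the output argument $q$ of the character), and confirming that the rearrangement $(\mu-\tau)^{*}$ is harmless because both $g$ and $\chi$ depend only on the multiset of parts of their lower index. Tracking these powers of $q$ and $(q-1)$ carefully is the whole of the work.
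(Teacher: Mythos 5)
Your proposal is correct and is exactly the argument the paper intends: the paper gives no detailed proof beyond "shown similarly as Theorem \ref{t:gen}", and your computation carries out precisely that plan, with the single substitution of the expansion $S^{*}_{-m}.1=\sum_{\rho\vdash m}C_{m,\rho}(t)q_{\rho}$ in place of the power-sum expansion, followed by the same bookkeeping of the factors $q^{n}$, $q^{-(n-\la_1)}$ and $(q-1)^{l(\mu-\tau)+l(\rho)}$. The powers and lengths all check out, and your observation that the rearrangement $(\mu-\tau)^{*}$ is harmless because $q_{\mu}$ and hence $\chi^{\la}_{\mu}(q)$ depend only on the multiset of parts is the right justification.
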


\section{Applications of the hook and two-row formulas}
Let $V=V_{\bar{0}}\bigoplus V_{\bar{1}}$ be a $\mathbb{Z}_2$-graded vector space over $\mathbb{C}(q)$ with basis $\{v_1,v_2,\cdots,v_a\}$ for the even subspace $V_{\bar 0}$ and basis $\{v_{a+1},v_{a+2},\cdots,v_{a+b}\}$ for the odd subspace $V_{\bar 1}$. Let $\pi$ be the endomorphism of $V\otimes V$ by 
\begin{align}
(v_k\otimes v_l)\pi=
\begin{cases}
(-1)^{|v_k||v_l|}v_l\otimes v_k+(q-1)v_k\otimes v_l, k<l\\
\frac{(-1)^{|v_k|}(q+1)+q-1}{2}v_k\otimes v_l, k=l\\
(-1)^{|v_k||v_l|}tv_l\otimes v_k, k>l,
\end{cases}
\end{align}
and let $\pi_i$ be the endomorphism of $V^{\otimes n}$ by letting $\pi$ acting on the $(i, i+1)$ factor and identity elsewhere, $i=1, \cdots, n-1$.
It is known \cite{Mit} that the map $\Phi^{q,n}_{a,b}: H_n(q)\longrightarrow \mathrm{End}_{\mathbb{C}(t)}(V^{\otimes n})$ defined by
\begin{align*}
T_i\longmapsto\pi_i
\end{align*}
gives rise to a representation of $H_n(q)$ called the sign $q$-permutation representation.

Let $\varphi^{n}_{a,b}$ be the character of $\Phi^{q,n}_{a,b}$ and let $s_{a,b}(\lambda)$ be the number of $(a,b)$-semi standard tableaux of shape $\lambda$ \cite{Sag}. Then $\varphi^{n}_{a,b}$ decomposes as follows \cite{Zhao}:
\begin{align*}
\varphi^{n}_{a,b}=\sum\limits_{\lambda\in H(a,b;n)}s_{a,b}(\lambda)\chi^{\lambda}
\end{align*}
where $H(a,b;n)=\{\lambda\vdash n\mid \lambda_{a+1}\leq b\}$ and $\chi^{\la}$ is the irreducible character of $H_n(q)$ labeled by $\la\vdash n$.

We consider two special cases $a=b=1$ and $a=2, b=0$. Then
\begin{align}\label{e:1,1}
\varphi^{n}_{1,1}&=\sum\limits_{k=0}^{n-1}\chi^{(n-k,1^k)}\\\label{e:2,0}
\varphi^{n}_{2,0}&=\sum\limits_{k=0}^{[n/2]}(n-2k+1)\chi^{(n-k,k)}
\end{align}

\begin{thm} For $\mu\vdash n$ with $l(\mu)=l$, we have that
\begin{align}\label{e:hooksum}
\varphi^n_{1,1}(T_{\gamma_{\mu}})&=\sum\limits_{i=0}^{n-1}\chi^{(n-i,1^i)}_{\mu}(q)
=(-1)^{n-l}2^{l-1}\prod\limits_{i=1}^{l}[\mu_i]_{-q},
\\\label{e:twosum}
\varphi^n_{2,0}(T_{\gamma_{\mu}})&=\sum\limits_{i=0}^{[n/2]}(n-2i+1)\chi^{(n-i,i)}_{\mu}(q)
=q^{n-2l(\mu)}\prod\limits_{i=1}^{l}(1+q+\mu_i(q-1)).
\end{align}
\end{thm}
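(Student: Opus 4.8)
The plan is to substitute the explicit hook and two-row character formulas of Theorems~\ref{t:hook} and \ref{t:two} into the two sums and reduce each one to a single generating-function evaluation supplied by Lemma~\ref{l:ab}. Both identities should come out after an elementary manipulation of the summation, so the substance lies in choosing the right reorganization and recognizing the resulting expression as a specialization of one of the generating functions in $v$.

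For the first identity I would reindex the hook sum by $k=n-i$, so that $\varphi^n_{1,1}(T_{\gamma_\mu})=\sum_{k=1}^n\chi^{(k,1^{n-k})}_\mu(q)$, and insert $\chi^{(k,1^{n-k})}_\mu(q)=(-1)^{n-k+l}\sum_{j=k}^n a_j(\mu;q)q^j$ from Theorem~\ref{t:hook}. Pulling out the sign $(-1)^{n+l}$ and interchanging the order of summation produces the inner factor $\sum_{k=1}^j(-1)^k$, which equals $-1$ when $j$ is odd and $0$ when $j$ is even. Hence the whole expression collapses to $(-1)^{n+l+1}$ times the odd part $\sum_{j\text{ odd}}a_j(\mu;q)q^j$. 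The key step is then to read this odd part as $\frac12\bigl(F(q)-F(-q)\bigr)$, where $F(v)=\sum_j a_j(\mu;q)v^j=(1-q^{-1}v)^l\prod_{i=1}^l[\mu_i]_v$ is the generating function of Lemma~\ref{l:ab} specialized at $t=q$. The factor $(1-q^{-1}v)^l$ forces $F(q)=0$, while $F(-q)=2^l\prod_{i=1}^l[\mu_i]_{-q}$; combining the signs yields exactly $(-1)^{n-l}2^{l-1}\prod_{i=1}^l[\mu_i]_{-q}$.

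For the second identity I would insert $\chi^{(n-i,i)}_\mu(q)=q^{n-l}\bigl(b_{n-i}(\mu;q)-b_{n-i+1}(\mu;q)\bigr)$ and invoke the symmetry $b_j(\mu;q)=b_{n-j}(\mu;q)$ recorded just after Lemma~\ref{l:ab} to rewrite each bracket as $b_i(\mu;q)-b_{i-1}(\mu;q)$. Summation by parts against the weights $n-2i+1$, whose successive differences are the constant $2$, converts $\sum_{i=0}^{[n/2]}(n-2i+1)(b_i-b_{i-1})$ into $(n-2[n/2]+1)\,b_{[n/2]}+2\sum_{i=0}^{[n/2]-1}b_i$. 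Splitting into the cases $n$ even and $n$ odd and applying the symmetry once more shows that this equals the full sum $\sum_{j=0}^n b_j(\mu;q)$. Finally, evaluating the $b$-generating function of Lemma~\ref{l:ab} at $v=1$ gives $\sum_{j=0}^n b_j(\mu;q)=q^{-l}\prod_{i=1}^l(1+q+\mu_i(q-1))$, and multiplying by the prefactor $q^{n-l}$ delivers the claimed $q^{n-2l}\prod_{i=1}^l(1+q+\mu_i(q-1))$.

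The routine ingredients are the index shift, the interchange of summations, and the two parity cases in the Abel summation. The only genuinely delicate points are the bookkeeping that makes the weighted two-row sum telescope back to the unweighted total $\sum_j b_j(\mu;q)$, and, in the hook case, the clean observation that specializing the $a$-generating function at $v=q$ annihilates it so that only the value at $v=-q$ survives. Once these two evaluations are isolated, both product formulas follow at once.
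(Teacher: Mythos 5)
Your proposal is correct and follows essentially the same route as the paper: both substitute the hook and two‑row formulas, interchange/telescope the sums to isolate $\sum_{j\ \mathrm{odd}}a_j(\mu;q)q^j$ and $\sum_{j=0}^n b_j(\mu;q)$ respectively, and then evaluate the generating functions of Lemma~\ref{l:ab} at $v=\pm q$ (using $\sum_i a_i(\mu;q)q^i=0$) and at $v=1$. Your write‑up merely makes explicit the Abel‑summation and parity bookkeeping that the paper compresses into a single line.
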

\begin{proof} It follows from the hook formula \eqref{e:hook} that
\begin{align*}
\sum\limits_{i=0}^{n-1}\chi^{(n-i,1^i)}_{\mu}(q)
=&\sum\limits_{i=0}^{n-1}(-1)^{l+i}\sum\limits_{j=n-i}^{n}a_{j}(\mu;q)q^{j}\\
=&(-1)^{n+l-1}\sum\limits_{j=1}^{[n/2]}a_{2j-1}(\mu;q)q^{2j-1}\\
=&(-1)^{n-l}\sum\limits_{j=0}^{n}\frac{(-1)^j-1}{2}a_{j}(\mu;q)q^j\\
=&(-1)^{n-l}2^{l-1}\prod\limits_{i=1}^{l}[\mu_i]_{-q},
\end{align*}
where we have used Lemma \ref{l:ab} and the identity $\sum_{i=0}^na_i(\mu; q)q^i=0$.

Similarly, using the two-row formula \eqref{e:two} and \eqref{e:bv} we have that
\begin{align*}
&\sum\limits_{i=0}^{[n/2]}(n-2i+1)\chi^{n-i,i}_{\mu}(q)\\
=&\sum\limits_{i=0}^{[n/2]}(n-2i+1)q^{n-l}(b_{n-i}(\mu;q)-b_{n-i+1}(\mu;q))\\
=&q^{n-l}\sum_{i=0}^nb_i(\mu; q)\\
=&q^{n-2l(\mu)}\prod\limits_{i=1}^{l}(1+q+\mu_i(q-1)).
\end{align*}
\end{proof}

We remake that Zhao \cite{Zhao} obtained a negated version of
\eqref{e:hooksum} as a $q$-analog Berele-Regev formula \cite{BR} by the Murnaghan-Nakayama rule.

\section{Determinant type Murnaghan-Nakayama rule}
The Murnaghan-Nakayama rule is an iterative formula to compute the Hecke algebra characters. The combinatorial rule
was proved by Ram \cite{Ram} using the Frobenius formula and the Hall-Littlewood symmetric functions. In this section, we formulate a determinant type Murnaghan-Nakayama rule and use it to compute the characters and also give a new proof of the Murnaghan-Nakayama rule.

For two partitions $\mu\subset\lambda,$ the set-theoretic difference $\theta=\lambda-\mu$ is called a {\it skew diagram} denoted by $\lambda/\mu$. A subset $\xi$ of $\theta$ is connected if any two squares in $\theta$ are connected by a path in $\xi$. The connected components, themselves skew diagrams, are by definition the maximal connected subsets of $\theta$.
A skew diagram $\lambda/\mu$ is a vertical (resp. horizontal) strip if each row (resp. column) contains at most one box. A skew diagram $\theta$ is a {\it border strip} if it contains no $2\times 2$ blocks of squares and is connected (see \cite{Mac}).
If the border strip $\theta=\la-\mu$ has $m$ connected components $(\xi_1,\xi_2,\ldots,\xi_m)$, we define the weight of $\theta$ by
$$wt(\theta;t)= (t-1)^{m-1}\prod\limits_{i=1}^{m}(-1)^{r(\xi_{i})-1}t^{c(\xi_i)-1}.$$
where the $r(\xi_i)$ (resp. $c(\xi_i)$) means the number of rows (resp. columns) in the border strip $\xi_i$. Usually we refer $\theta$ as a broken border strip to emphasize the non-connectedness of $\theta$.

Let $\lambda\vdash n$. A Young tableau of shape $\lambda$ is an assignment of numbers $1,2,\ldots,n$ into the dots of the Young diagram.
A tableau is {\it standard} if its rows and columns are increasing sequences. Let $f^{\lambda}$ be the number of standard tableau of shape $\lambda$ and let $\lambda^{-}$ be a partition obtained by removing an inner corner of $\lambda,$ then \cite{Sag}
\begin{align}\label{e:branch}
f^{\lambda}=\sum\limits_{\lambda^{-}}f^{\lambda^{-}}.
\end{align}

Recall from \eqref{e:qS}, for a fixed partition $\lambda$ with $l=l(\lambda)$ we have that
\begin{align}\label{e:qS2}
q^*_{k}S_{\lambda}.1=\sum_{\mbox{\tiny$\begin{array}{c}
\tau_1,\ldots, \tau_l\geq0\\
|\tau|=k\end{array}$}}(1-t)^{l-\sum\delta_{\tau_i,0}}S_{\lambda-\tau}.1.
\end{align}

For any composition $\tau\vDash k$, $S_{\lambda-\tau}.1=sg(\sigma)S_{\mu}.1$ if
$\la-\tau+\delta$ is strict and $\mu=\sigma(\lambda-\tau+\delta)-\delta$ for some permutation $\sigma\in \mathfrak S_l$, otherwise $S_{\lambda-\tau}.1=0$.
Note that $\mu$ must be a partition $\subset\la$ with weight $|\la|-k$ if $S_{\lambda-\tau}.1\neq 0$. 
In fact, note that $\mu_i=\lambda_{\sigma(i)}-\tau_{\sigma(i)}+i-\sigma(i)$ for some permutation $\sigma$ that sends $\lambda-\tau+\delta$ into partition $\mu$.
If $j=\sigma(i)\geq i$, then $\lambda_i-\mu_i=\lambda_i-\lambda_j+\tau_j-i+j\geq 0$. If $j=\sigma(i)<i$, we claim that $\la_i-\mu_i\geq 0$ otherwise $\lambda_i-\mu_i=\lambda_i-\lambda_j+\tau_j-i+j<0$, then
$$(\lambda_j-\tau_j, \lambda_{j+1}-\tau_{j+1}, \ldots, \lambda_i-\tau_i)+(i-j, \ldots, 1, 0)=(\lambda_j-\tau_j+i-j, \ldots, \lambda_i-\tau_i).$$
Observe that $(\lambda_i-\tau_i)-(\lambda_j-\tau_j+i-j)<0$
so $\sigma$ cannot send $i$ to $j$, which is a contradiction!

Conversely, for any $\mu\vdash |\la|-k$ and any $\sigma\in \mathfrak S_l$, the $l$-tuple
\begin{equation*}
(\la_1-\mu_{\sigma(1)}-1+\sigma(1), \la_2-\mu_{\sigma(2)}-2+\sigma(2), \cdots, \la_l-\mu_{\sigma(l)}-l+\sigma(l))
\end{equation*}
is a composition of $k$ provided that $\la_i-\mu_{\sigma(i)}+i-\sigma(i)\geq 0$. Then
$sg(\sigma)S_{\mu}.1=S_{\la-\tau}.1$ for $\tau_i=\la_i-\mu_{\sigma(i)}-i+\sigma(i)$. Therefore \eqref{e:qS2} can be rewritten as:
\begin{align*}
q^*_{k}S_{\lambda}.1&=\sum\limits_{\mu}\sum_{\mbox{\tiny$\begin{array}{c}
\sigma\in  S_l\\
\lambda_i-i-\mu_{\sigma(i)}+\sigma(i)\geq 0\end{array}$}}sgn(\sigma)(1-t)^{l-\sum\delta_{\lambda_i-i-\mu_{\sigma(i)}+\sigma(i),0}}S_{\mu}.1\\
&=(1-t)^{l}\sum\limits_{\mu}\det(\delta_{\lambda_i-i\geq \mu_j-j}(1-t)^{-\delta_{\lambda_i-i, \mu_j-j}})S_{\mu}.1
\end{align*}
where the sum is over all partitions $\mu\subset \lambda$ such that $\mid\lambda/\mu\mid=k$.

For partitions $\mu\subset\lambda$ and $|\la/\mu|=k$, consider the $l(\la)\times l(\la)$-matrix
$$M(\lambda/\mu; t)=(\delta_{\lambda_i-i\geq\mu_j-j}(1-t)^{-\delta_{\lambda_i-i, \mu_j-j}})$$
where $\delta_{a\geq b}=1$ if $a\geq b$ and $\delta_{a\geq b}=0$ otherwise. Then we have proven the following theorem.
\begin{thm}
Let $\lambda=(\lambda_{1},\ldots, \lambda_{l})$ be a partition and $k$ be a positive integer. Then one has
\begin{align}\label{e:de}
q^*_{k}S_{\lambda}.1=\sum\limits_{\mu}(1-t)^{l}\det M(\lambda/\mu; t)S_{\mu}.1
\end{align}
where the sum is over all partitions $\mu\subset \lambda$ such that $\mid\lambda/\mu\mid=k$.
\end{thm}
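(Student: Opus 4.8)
The plan is to start from the expansion of $q_k^* S_{\la}.1$ over compositions given in \eqref{e:qS2}, apply the straightening rule of Proposition 2.1(2) to each summand, and then reorganize the surviving terms into a determinant. Concretely, I would write
\[
q_k^* S_{\la}.1=\sum_{\substack{\tau_1,\ldots,\tau_l\geq 0\\ |\tau|=k}}(1-t)^{l-\sum_i\delta_{\tau_i,0}}S_{\la-\tau}.1,
\]
and recall that each $S_{\la-\tau}.1$ is either zero, or equals $sg(\sigma)S_{\mu}.1$ for the unique $\sigma\in\mathfrak{S}_l$ straightening the tuple $\la-\tau+\delta$ into the strict partition $\mu+\delta$.

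First I would identify which compositions $\tau$ actually contribute. The nonzero terms are exactly those for which $\la-\tau+\delta$ has distinct nonnegative entries; writing $\mu_i=\la_{\sigma(i)}-\tau_{\sigma(i)}+i-\sigma(i)$, I would verify by the case analysis on whether $\sigma(i)\geq i$ or $\sigma(i)<i$ that $\mu$ is a genuine partition with $\mu\subset\la$ and $|\la/\mu|=|\tau|=k$. This pins down the admissible target shapes $\mu$ and rules out the rest.

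Next I would set up the reverse correspondence: for a fixed partition $\mu\subset\la$ with $|\la/\mu|=k$ and any $\sigma\in\mathfrak{S}_l$ satisfying the positivity constraint $\la_i-i-\mu_{\sigma(i)}+\sigma(i)\geq 0$ for all $i$, the tuple $\tau_i=\la_i-\mu_{\sigma(i)}-i+\sigma(i)$ is a composition of $k$ with $S_{\la-\tau}.1=sg(\sigma)S_{\mu}.1$. This yields a bijection between the contributing compositions and such pairs $(\mu,\sigma)$, allowing me to exchange the single sum over $\tau$ for a double sum over $\mu$ and $\sigma$ and then to group by $\mu$:
\[
q_k^* S_{\la}.1=(1-t)^l\sum_\mu\Bigl(\sum_{\substack{\sigma\in\mathfrak{S}_l\\ \la_i-i-\mu_{\sigma(i)}+\sigma(i)\geq 0}}sg(\sigma)(1-t)^{-\sum_i\delta_{\la_i-i,\,\mu_{\sigma(i)}-\sigma(i)}}\Bigr)S_{\mu}.1.
\]

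Finally I would recognize the parenthesized inner sum as $\det M(\la/\mu;t)$. Expanding $\det M=\sum_\sigma sg(\sigma)\prod_i M_{i,\sigma(i)}$ with $M_{ij}=\delta_{\la_i-i\geq\mu_j-j}(1-t)^{-\delta_{\la_i-i,\mu_j-j}}$, the factor $\prod_i\delta_{\la_i-i\geq\mu_{\sigma(i)}-\sigma(i)}$ eliminates precisely the permutations that violate the positivity constraint, while the remaining powers of $(1-t)$ match termwise, which gives \eqref{e:de}. The main obstacle is the combinatorial bookkeeping in the middle step: confirming that $\tau\leftrightarrow(\mu,\sigma)$ is a well-defined bijection onto the positivity-admissible pairs, so that distinct $\tau$ never collapse and every admissible $(\mu,\sigma)$ is hit exactly once. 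Once that is secured, the determinant identification in the last step is purely formal.
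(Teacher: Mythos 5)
Your proposal follows essentially the same route as the paper's own argument: start from \eqref{e:qS2}, straighten each $S_{\la-\tau}.1$ via Proposition 2.1(2), verify through the same case analysis on $\sigma(i)\geq i$ versus $\sigma(i)<i$ that the surviving terms correspond to partitions $\mu\subset\la$ with $|\la/\mu|=k$, invert the correspondence to pairs $(\mu,\sigma)$ subject to $\la_i-i-\mu_{\sigma(i)}+\sigma(i)\geq 0$, and identify the resulting signed sum as $\det M(\la/\mu;t)$. The argument is correct and matches the paper step for step, so no further comparison is needed.
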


The matrix $M(\la/\mu; t)$ has the following properties: (i) if $M(\la/\mu; t)_{ij}=\frac1{1-t}$, then all entries in the southwest region to the $(i, j)$-entry are zero; (ii) if $M(\la/\mu; t)_{ij}=0$, then all entries down the $j$th column are also zero;
and (iii) if $M(\la/\mu; t)_{ij}=1$, then all entries in the northeast region to the $(i, j)$-entry are also $1$.
et's divide $\lambda/\mu$ into several cases:

{\bf Case $1:$} If $\lambda/\mu$ is a $k$-border strip with the initial box at the
$r$th row, 
then $M(\la/\mu; t)$ has the following form: 
\begin{gather*}
\begin{pmatrix}
\frac1{1-t}  & \cdots & 1 & 1& 1 & \cdots & 1 & 1 \\
\vdots           & \ddots & \vdots  & \vdots & \vdots & \cdots & \vdots & \vdots \\
0 & \cdots & \frac1{1-t}  & 1 & 1 & \cdots & 1 & 1\\
0 & \cdots & 0 &  1  & 1 & \cdots & 1 & 1 \\
0 &\cdots & 0 & \frac{1}{1-t}  & 1 &\cdots & 1 &1\\
0 &\cdots & 0 &0 & \frac{1}{1-t} & \cdots & 1 &1\\
\vdots &\vdots & \vdots &\vdots &\vdots &\ddots &\vdots &\vdots\\
0 &\cdots & 0 & 0 &0 &\cdots &\frac{1}{1-t} &1
\end{pmatrix}_{l\times l}
=\begin{pmatrix}M_0 & * \\
 0 & M_1
\end{pmatrix}, \\
\det(\lambda/\mu;k)=(1-t)^{-r+1}(\frac{-t}{1-t})^{l-r}
\end{gather*}
where $M_0$ is an $(r-1)\times (r-1)$ upper-triangular matrix with $\frac1{1-t}$ on the diagonal and $M_1$ is a $(l-r+1)\times (l-r+1)$ quasi upper-triangular matrix
with $1$ on the diagonal and $\frac1{1-t}$ on the (lower) secondary diagonal. For simplicity we denote $M_1$ by $M(\xi; t)$, where
$\xi$ is the border strip obtained from $\la/\mu$ by removing the initial empty boxes.

{\bf Case $2:$} If $\lambda/\mu$ is connected and contains a $2 \times 2$ block of boxes, then
$\det(\la/\mu; t)=0$. This can be seen as follows. Without loss of generality, suppose $ht(\lambda/\mu)=l(\lambda)-1$ and the $2 \times 2$ block appear at the $r$-th and $(r+1)$-th rows $(1\leq r\leq l-1)$, then $M(\la/\mu; t)$ has the following form:
\begin{gather*}
M(\lambda/\mu;t)=
\bordermatrix{%
& & & &r-1 &r & & & \cr
&1  &1 &\cdots &1 &1 &\cdots &1 &1\cr
&\frac{1}{1-t}  &1 &\cdots &1 &1 &\cdots &1 &1\cr
&0 &\frac{1}{1-t} &\cdots &1 &1 &\cdots &1 &1\cr
&\vdots &\vdots &\ddots &\vdots &\vdots &\ddots &\vdots &\vdots\cr
r &0 &0 &\cdots &\frac{1}{1-t} &1 &\cdots &1 &1\cr
r+1 &0 &0 &\cdots &0 &1 &\cdots &1 &1\cr
&\vdots &\vdots &\ddots &\vdots &\vdots &\ddots &\vdots &\vdots\cr
&0 &0 &0 &0 &0 &\cdots &\frac{1}{1-t} &1
}, \quad \det(\lambda/\mu;t)=0.
\end{gather*}

{\bf Case $3:$} If $\lambda/\mu$ has two connected components $\xi_1, \xi_2$, then $M(\lambda/\mu; t)$
is a $2\times 2$ upper-triangular block matrix with the diagonal blocks corresponding to $\xi_1, \xi_2$. If
one of the components contains a $2\times 2$-block of boxes, then $\det(\la/\mu; t)=0$. Suppose each
$\xi_i$ is a border strip and there are $r_i$ unoccupied rows immediately above $\xi_i$ on the border of $\lambda$, then $M(\la/\mu; t)$ has the following form:
\begin{gather*}
M(\la/\mu; t)=\begin{pmatrix}M_1 & * \\
 0 & M_2
\end{pmatrix}, \qquad \det M_i=(1-t)^{-r_i+1}(\frac{-t}{1-t})^{ht(\xi_i)-1},
\end{gather*}
where each $M_i$ is a $2\times 2$ block diagonal matrices specified in case one.

In general, if $\lambda/\mu$ has $m$ connected components $\xi_1, \xi_2,\cdots,\xi_m$, then $\det M(\la/\mu; t)=0$
unless each $\xi_i$ is a border strip. Then
\begin{gather*}
M(\lambda/\mu;t)=
\begin{pmatrix}
M_1  &* &\cdots &*\\
0  &M_2 &\cdots &*\\
0 &\vdots &\cdots &\vdots\\
0 &0 &\vdots &M_s
\end{pmatrix}
\end{gather*}
where the diagonal blocks are either $\frac{1}{1-t}$ or the quasi upper-triangular matrix $M(\xi_j;t)$, $i=1,2,\ldots,s; j=1,2\ldots,m$.
It is clear that the number of $\frac1{1-t}$ on the diagonal is equal to $Card\{\lambda_i=\mu_i\mid i=1,\ldots,l\}=l-\sum\limits_{j} r(\xi_j)$ and the order of $M(\xi_j;k)$ is equal to $r(\xi_j), j=1,2, \ldots, m$.
Therefore,
\begin{align*}
\det(\lambda/\mu;t)&=(\frac{1}{1-t})^{l-\sum r(\xi_i)}\prod\limits_{i=1}^{m}\det(\xi_{i};t)\\
&=(\frac{1}{1-t})^{l-\sum r(\xi_i)}\prod\limits_{i=1}^{m}(\frac{-t}{1-t})^{r(\xi_i)-1}\\
&=(1-t)^{m-l}\prod\limits_{i=1}^{m}(-t)^{r(\xi_i)-1}.
\end{align*}
Thus we have proved the following result.
\begin{cor} \label{t:det}
Let $\lambda$ be a partition and $k$ a positive integer. Then one has that
\begin{align}
q^*_{k}S_{\lambda}.1=\sum\limits_{\mu}(1-t)^{C(\la/\mu)}\prod\limits_{i=1}^{C(\la/\mu)}(-t)^{r(\xi_i)-1}S_{\mu}.1
\end{align}
where the sum runs through all partitions $\mu\subset \lambda$ such that $\lambda/\mu$ is a $k$-broken border strip.
Here $\xi_1,\ldots,\xi_{C(\la/\mu)}$ are the connected components of $\la/\mu$.
\end{cor}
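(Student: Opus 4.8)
The plan is to derive the corollary directly from the determinant formula \eqref{e:de}, so that everything reduces to evaluating the single scalar $\det M(\la/\mu;t)$ for each partition $\mu\subset\la$ with $|\la/\mu|=k$. Since \eqref{e:de} already exhibits $(1-t)^l\det M(\la/\mu;t)$ as the coefficient of $S_\mu.1$, it suffices to prove that this determinant vanishes unless every connected component of $\la/\mu$ is a border strip, and that in the surviving case
\[
\det M(\la/\mu;t)=(1-t)^{m-l}\prod_{i=1}^{m}(-t)^{r(\xi_i)-1},\qquad m=C(\la/\mu),
\]
after which multiplication by $(1-t)^l$ cancels the $l$-dependence and yields the claimed formula.

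First I would record the three structural properties (i)--(iii) of the matrix $M(\la/\mu;t)$. The diagonal entries equal $\tfrac{1}{1-t}$ exactly at the indices $i$ with $\la_i=\mu_i$; a zero entry propagates down its column; and a $1$-entry fills the whole northeast quadrant above it. Translating the combinatorial shape of $\la/\mu$ into the arithmetic of the indexing sequences $\la_i-i$ and $\mu_j-j$, these properties show that, once the ``empty'' rows $\la_i=\mu_i$ are recorded, $M(\la/\mu;t)$ is block upper-triangular with respect to the partition of $\{1,\dots,l\}$ into the index blocks of the connected components together with the singleton empty rows. Hence $\det M(\la/\mu;t)$ is the product of the diagonal blocks and the off-diagonal $*$-blocks are irrelevant.

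The heart of the argument is the evaluation of these diagonal blocks. A singleton empty row contributes a $1\times1$ block $\tfrac{1}{1-t}$. A connected component $\xi$ occupying $r=r(\xi)$ rows contributes an $r\times r$ block $M(\xi;t)$, and here I would split into two subcases: if $\xi$ contains a $2\times2$ block of boxes then two of the rows of $M(\xi;t)$ coincide (as one checks already on $\la=(2,2)$, $\mu=\emptyset$, where the block is $\left(\begin{smallmatrix}1&1\\1&1\end{smallmatrix}\right)$), forcing $\det M(\xi;t)=0$ and killing that term; otherwise $\xi$ is a border strip and $M(\xi;t)$ is the quasi-upper-triangular matrix with $1$ on and above the diagonal and $\tfrac{1}{1-t}$ on the subdiagonal, whose determinant telescopes to $\left(\tfrac{-t}{1-t}\right)^{r-1}$. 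Multiplying all blocks and using $\#\{\,i:\la_i=\mu_i\,\}=l-\sum_i r(\xi_i)$ gives
\[
\det M(\la/\mu;t)=\Big(\tfrac{1}{1-t}\Big)^{\,l-\sum_i r(\xi_i)}\prod_{i=1}^{m}\Big(\tfrac{-t}{1-t}\Big)^{r(\xi_i)-1}=(1-t)^{m-l}\prod_{i=1}^{m}(-t)^{r(\xi_i)-1},
\]
and substituting this into \eqref{e:de} produces exactly the stated expression, with the sum automatically restricted to those $\mu$ for which $\la/\mu$ is a $k$-broken border strip.

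The main obstacle is the $2\times2$-block vanishing together with the precise shape claims for a border-strip block: one must verify that a $2\times2$ block genuinely forces equal (hence linearly dependent) rows in $M(\xi;t)$, and, dually, that for an honest border strip the subdiagonal $\tfrac{1}{1-t}$ entries land in exactly the right positions so that the telescoping is valid. Both assertions come down to a careful translation of the border-strip condition (connected, no $2\times2$ block) into inequalities and equalities among the $\la_i-i$ and $\mu_j-j$, and this index bookkeeping is where the argument must be handled with the most care.
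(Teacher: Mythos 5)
Your proposal is correct and follows essentially the same route as the paper: both start from the determinant formula \eqref{e:de}, exploit the block upper-triangular structure of $M(\la/\mu;t)$ induced by the connected components and the empty rows, show the determinant vanishes when a component contains a $2\times 2$ block, evaluate the border-strip blocks as $\bigl(\tfrac{-t}{1-t}\bigr)^{r(\xi)-1}$, and combine using $\#\{i:\la_i=\mu_i\}=l-\sum_i r(\xi_i)$. The index bookkeeping you flag as the delicate point is likewise handled in the paper only at the level of displaying the schematic matrix shapes.
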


\begin{rem} If $k=|\lambda|=n$ in Corollary \ref{t:det}, then $q^*_{k}S_{\lambda}.1=0$ unless $\lambda$ is a hook. In this case,  $q^*_{n}S_{(m,1^{n-m})}.1=(1-t)(-t)^{n-m}$. So for all $\lambda\vdash n\geq1$,
\begin{align*}
g^{\lambda}_{(n)}(t)=
\begin{cases}
0,&\text{if $\lambda$ is not a hook}\\
(1-t)(-t)^{n-m},&\text{if $\lambda=(m,1^{n-m})$}.
\end{cases}
\end{align*}
Similarly, when $k=1$, then $q^*_1S_{\lambda}.1=(1-t)\sum\limits_{\lambda^{-}}S_{\lambda^{-}}.1$, where
$\lambda^{-}$ runs through all partitions obtained from $\la$ by removing an inner corner. Therefore,
\begin{align*}
g^{\lambda}_{(1^n)}(t)=(1-t)\sum\limits_{\lambda^{-}}g^{\lambda^{-}}_{(1^{n-1})}(t).
\end{align*}
By \eqref{e:branch} it follows that,
\begin{align*}
g^{\lambda}_{(1^n)}(t)=(1-t)^nf^{\lambda}
\end{align*}
where $f^{\lambda}$ is the number of standard tableau of shape $\lambda$.
\end{rem}

We now present a new proof of the Murnaghan-Nakayama rule for $H_n(q)$ \cite{Ram} (cf. \cite{HR, Ha}).
\begin{thm}{\bf (The Murnaghan-Nakayama rule for $H_n(q)$)}\label{t:M-N rule}
Let $\mu$ be a partition and $k$ a positive integer. Then one has that
\begin{align}
\tilde{q}_{k}S_{\mu}.1=\sum\limits_{\lambda}wt(\lambda/\mu;q)S_{\lambda}.1,
\end{align}
where the sum is over all partitions $\la\supset\mu$   
such that $\lambda/\mu$ is a $k$-broken border strip.
\end{thm}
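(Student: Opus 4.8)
The plan is to deduce the rule by \emph{duality} from Corollary~\ref{t:det}, which already records the action of $q_k^*$ on Schur functions in terms of broken border strips. Because the Schur functions $S_\lambda.1=s_\lambda$ form an orthonormal basis for the inner product \eqref{e:inner}, expanding $\tilde q_k S_\mu.1$ in this basis reduces to computing, for each partition $\lambda$, the coefficient $\langle \tilde q_k S_\mu.1, S_\lambda.1\rangle$. Thus it suffices to prove that this coefficient equals $wt(\lambda/\mu;t)$ whenever $\lambda\supset\mu$ and $\lambda/\mu$ is a $k$-broken border strip, and vanishes otherwise.

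First I would pass to the adjoint. Since $p_n^*=n\,\partial/\partial p_n$ is the adjoint of multiplication by $p_n$, the operator $q_k^*$ built from $Q^*(z)$ is the adjoint of multiplication by $q_k$, and the same identity holds at every specialization of the parameter. Writing $\tilde q_k(t)=\tfrac{t^k}{t-1}q_k(t^{-1})$, multiplication by $\tilde q_k(t)$ is $\tfrac{t^k}{t-1}$ times multiplication by $q_k$ at parameter $t^{-1}$, so its adjoint is $\tfrac{t^k}{t-1}q_k^*(t^{-1})$. Hence
\begin{align*}
\langle \tilde q_k S_\mu.1,\, S_\lambda.1\rangle
=\frac{t^k}{t-1}\,\langle S_\mu.1,\, q_k^*(t^{-1})\,S_\lambda.1\rangle .
\end{align*}
Applying Corollary~\ref{t:det} with the parameter replaced by $t^{-1}$ expands $q_k^*(t^{-1})S_\lambda.1$ over all partitions $\nu\subset\lambda$ with $\lambda/\nu$ a $k$-broken border strip, and pairing with $S_\mu.1$ together with orthonormality isolates the single term $\nu=\mu$. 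This yields, with $\xi_1,\dots,\xi_m$ the connected components of $\lambda/\mu$ and $m=C(\lambda/\mu)$,
\begin{align*}
\langle \tilde q_k S_\mu.1,\, S_\lambda.1\rangle
=\frac{t^k}{t-1}\,(1-t^{-1})^{m}\prod_{i=1}^{m}(-t^{-1})^{r(\xi_i)-1},
\end{align*}
which is nonzero exactly on the index set of the statement.

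The remaining and most delicate step is the algebraic simplification to $wt(\lambda/\mu;t)=(t-1)^{m-1}\prod_i(-1)^{r(\xi_i)-1}t^{c(\xi_i)-1}$. I would rewrite $(1-t^{-1})^m=(t-1)^m t^{-m}$ to collapse the scalar prefactor to $t^{k-m}(t-1)^{m-1}$, then split off the sign $\prod_i(-1)^{r(\xi_i)-1}$, leaving the powers of $t$ as $t^{k-m}\prod_i t^{-(r(\xi_i)-1)}=t^{\,k-\sum_i r(\xi_i)}$. The key combinatorial input is that each connected component of a broken border strip is itself a border strip, so $|\xi_i|=r(\xi_i)+c(\xi_i)-1$; summing and using $\sum_i|\xi_i|=k$ converts $k-\sum_i r(\xi_i)$ into $\sum_i(c(\xi_i)-1)$, which matches $\prod_i t^{c(\xi_i)-1}$ exactly. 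The main obstacle is precisely this bookkeeping --- tracking how the parameter inversion $t\mapsto t^{-1}$ and the prefactor $t^k/(t-1)$ interact with the row- and column-counts $r(\xi_i),c(\xi_i)$, and verifying that the summation ranges on the two sides coincide. Since the resulting identity is a polynomial identity in the parameter, specializing it to the Hecke variable $q$ gives the stated rule.
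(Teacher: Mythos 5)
Your proposal is correct and follows essentially the same route as the paper: dualize Corollary~\ref{t:det} (with the parameter inverted to account for $\tilde q_k(t)=\frac{t^k}{t-1}q_k(t^{-1})$), use orthonormality of the $S_\lambda.1$ to extract the coefficient, and reduce everything to the border-strip identity $r(\xi_i)+c(\xi_i)=|\xi_i|+1$ together with $\sum_i|\xi_i|=k$. Your more explicit bookkeeping of the $t\mapsto t^{-1}$ substitution is in fact slightly cleaner than the paper's compressed statement of the target identity, but the argument is the same.
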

\begin{proof} By Corollary \ref{t:det} and duality, it is enough to check that
\begin{align*}
(1-t)^{m}\prod\limits_{i=1}^{m}(-t)^{r(\xi_i)-1}=t^{k-1}(1-t)wt(\lambda/\mu;t)
\end{align*}
where $\lambda/\mu$ is a $k$-broken border strip with $m$ connected components $\xi_1,\ldots,\xi_m$.
Suppose $\mid\xi_i\mid=k_i,$ then $\sum_i k_i=k.$ Since $\xi_i$ is a border strip, $r(\xi_i)+c(\xi_i)=k_i+1$, $i=1,2,\ldots, m$. Therefore,
\begin{align*}
t^{k-1}(1-t)wt(\lambda/\mu;t)&=(1-t)^mt^{k-m}\prod\limits_{i=1}^{m}(-1)^{r(\xi_i)-1}t^{1-c(\xi_i)}\\
&=(1-t)^m\prod\limits_{i=1}^{m}(-1)^{r(\xi_i)-1}t^{k_i-c(\xi_i)}\\
&=(1-t)^{m}\prod\limits_{i=1}^{m}(-t)^{r(\xi_i)-1}.
\end{align*}
\end{proof}

\section{The bitrace of the regular representation}
The Hecke algebra $H_n(q)$ is a $(H_n(q), H_n(q))$-bimodule under the left and right regular actions, and the two actions mutually commute with each other. As  a result
\begin{equation}
H_n(q)=\bigoplus_{\lambda\vdash n} V_{\lambda}\otimes V^{\lambda}
\end{equation}
where $V_{\lambda}$ (resp. $V^{\lambda}$) is the irreducible left (resp. right) $H_n(q)$-module labeled by $\lambda$.

Following \cite{HLR}, we introduce the bitrace of the regular representation of $H_n(q)$ by defining for any compositions
$\lambda, \mu\models n$
\begin{align}
btr(\lambda, \mu)=\sum\limits_{\rho\vdash n}\chi^{\rho}_{\lambda}(q)\chi^{\rho}_{\mu}(q).
\end{align}
This is a $q$-deformation of the second orthogonality relation between irreducible characters of the symmetric group. It is known that $btr(\lambda, \mu)|_{q=1}=\delta_{\lambda, \mu}z_{\lambda}$.

Halverson, Luduc and Ram \cite{HLR} proved the following result by using Roichman's formula \cite{Ro} for the Hecke algebra.
We will give an elementary proof using the technique developed in this paper.

\begin{thm}\label{t:brt}
Let $\lambda=(\lambda_1,\lambda_2,\ldots,\lambda_r)$ and $\mu=(\mu_1,\mu_2,\ldots,\mu_s)$ be two compositions of $n$. Then
\begin{align}\label{e:brt}
btr(\lambda, \mu)=(q-1)^{-r-s}\sum\limits_M wt(M),
\end{align}
where $M=(m_{ij})$ run through all $r\times s$ nonnegative integral matrices
such that $\sum_{a}m_{ia}=\la_i$ $(1\leq i\leq r)$ and $\sum_bm_{bj}=\mu_j$ $(1\leq j\leq s)$ and
\begin{align*}
wt(M)=\prod\limits_{m_{ij}\neq 0}(q-1)^2[m_{ij}]_{q^2}.
\end{align*}
\end{thm}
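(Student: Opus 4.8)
The plan is to recognize the bitrace as a single Hall scalar product of two of Ram's characteristic functions and then to evaluate that scalar product by a vertex-operator contraction. First I would invoke the Frobenius formula \eqref{e:characters} together with the orthonormality of the Schur functions: since $\chi^{\rho}_{\lambda}(q)=\langle\tilde q_{\lambda}(q),s_{\rho}\rangle$ and $\langle s_{\rho},s_{\sigma}\rangle=\delta_{\rho\sigma}$, summation over $\rho$ collapses the double sum,
\[
btr(\lambda,\mu)=\sum_{\rho\vdash n}\langle\tilde q_{\lambda}(q),s_{\rho}\rangle\,\langle\tilde q_{\mu}(q),s_{\rho}\rangle=\langle\tilde q_{\lambda}(q),\tilde q_{\mu}(q)\rangle.
\]
Unwinding $\tilde q_{\mu}(q)=q^{n}(q-1)^{-l(\mu)}q_{\mu}(q^{-1})$ rewrites this as $q^{2n}(q-1)^{-r-s}\langle q_{\lambda}(q^{-1}),q_{\mu}(q^{-1})\rangle$, so the problem reduces to computing the scalar product $\langle q_{\lambda}(t),q_{\mu}(t)\rangle$ and specializing $t=q^{-1}$.

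For the scalar product I would pass to generating functions and contract. The operator $Q(z)$ of \eqref{e:qop} is multiplication and its Hall adjoint is $Q^{*}(z^{-1})$; the usual vertex-operator calculus (exactly as in the proof of Proposition \ref{Up Down}, the $Q^{*}$--$Q$ analogue of \eqref{e:relations1}) gives the contraction
\[
Q^{*}(w)Q(z)=Q(z)Q^{*}(w)\,\frac{(w-tz)^{2}}{(w-z)(w-t^{2}z)}.
\]
Writing $\langle q_{\lambda},q_{\mu}\rangle$ as the coefficient of $z_{1}^{\lambda_{1}}\cdots z_{r}^{\lambda_{r}}w_{1}^{\mu_{1}}\cdots w_{s}^{\mu_{s}}$ in $\langle Q(z_{1})\cdots Q(z_{r}).1,\,Q(w_{1})\cdots Q(w_{s}).1\rangle$, I would move each adjoint factor rightward past all of the $Q(w_{j})$; every crossing of the $i$-th dual factor past $Q(w_{j})$ contributes the single scalar $(1-tz_{i}w_{j})^{2}/[(1-z_{i}w_{j})(1-t^{2}z_{i}w_{j})]$, and the operators kill $1$ at the end, so
\[
\langle Q(z_{1})\cdots Q(z_{r}).1,\,Q(w_{1})\cdots Q(w_{s}).1\rangle=\prod_{i=1}^{r}\prod_{j=1}^{s}\frac{(1-tz_{i}w_{j})^{2}}{(1-z_{i}w_{j})(1-t^{2}z_{i}w_{j})}.
\]

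Extraction is then purely combinatorial. Expanding each factor as $\sum_{m\ge0}g_{m}(t)(z_{i}w_{j})^{m}$, where $g(u;t)=(1-tu)^{2}/[(1-u)(1-t^{2}u)]$, the prescribed $z$- and $w$-degrees force the exponents $m_{ij}$ to form a nonnegative integer matrix $M$ with row sums $\lambda_{i}$ and column sums $\mu_{j}$, whence $\langle q_{\lambda},q_{\mu}\rangle=\sum_{M}\prod_{i,j}g_{m_{ij}}(t)$. A short partial-fraction computation gives $g_{0}(t)=1$ and $g_{m}(t)=(1-t)^{2}[m]_{t^{2}}$ for $m\ge1$. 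Finally, setting $t=q^{-1}$ and distributing $q^{2n}=\prod_{i,j}q^{2m_{ij}}$ into the product, the identity $q^{2m}g_{m}(q^{-1})=(q-1)^{2}[m]_{q^{2}}$ (and $q^{0}g_{0}(q^{-1})=1$) matches each nonzero entry of $M$ to its weight, producing $wt(M)$ and hence \eqref{e:brt}.

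The reduction in the first paragraph is standard, and once the contraction is in place the rest is mechanical; the main obstacle is therefore the bookkeeping in the middle step—correctly identifying the Hall adjoint of $Q(z)$ and keeping the order of crossings straight so that \emph{exactly one} factor is produced for each pair $(i,j)$ with no spurious contributions from the $Q$'s (respectively $Q^{*}$'s) commuting among themselves—together with the single-variable verification $g_{m}(t)=(1-t)^{2}[m]_{t^{2}}$ and its consequence $q^{2m}g_{m}(q^{-1})=(q-1)^{2}[m]_{q^{2}}$, which is precisely what converts the analytic generating function into the combinatorial weight.
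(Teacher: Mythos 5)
Your proposal is correct, and it reaches the same two landmarks as the paper --- the reduction $btr(\lambda,\mu)=\langle\tilde q_{\lambda}(q),\tilde q_{\mu}(q)\rangle$ via Frobenius/orthonormality, and the $Q^{*}$--$Q$ contraction with kernel $\frac{(z-tw)^{2}}{(z-w)(z-t^{2}w)}$ (the paper's relation preceding \eqref{e:com3}) --- but the middle of the argument is organized quite differently. The paper extracts coefficients of the contraction to get the component-wise commutator \eqref{e:com3}, then proves the operator identity $q^{*}_{k}q_{\mu}=\sum_{\tau\models k}\prod_{\tau_a>0}(\tau_a)_t\,q_{\mu-\tau}$ (Theorem \ref{t:q*q}) by induction on $k+|\mu|$, which requires the auxiliary telescoping identity \eqref{e:identity} of Lemma \ref{t:idetity}; iterating then yields \eqref{e:H}. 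You instead keep everything in generating-function form, normal-order the full product $Q^{*}(z_r^{-1})\cdots Q^{*}(z_1^{-1})Q(w_1)\cdots Q(w_s)$ at once to get the closed product $\prod_{i,j}\frac{(1-tz_iw_j)^{2}}{(1-z_iw_j)(1-t^{2}z_iw_j)}$ (this is legitimate: the $Q$'s commute among themselves, as do the $Q^{*}$'s, and $Q^{*}(u).1=1$, $\langle 1,Q(w).1\rangle=1$), so the matrix sum appears immediately from multiplying out the $rs$ factors, and the single partial-fraction evaluation $g_m(t)=(1-t)^{2}[m]_{t^{2}}$ replaces both the induction and Lemma \ref{t:idetity}. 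What your route buys is brevity and a direct link to the classical Hall--Littlewood scalar product $\langle q_{\lambda},q_{\mu}\rangle$; what the paper's route buys is the intermediate operator formula for $q^{*}_{k}q_{\mu}$, which is of independent use. Your closing specialization $q^{2m}g_m(q^{-1})=(q-1)^{2}[m]_{q^{2}}$ is exactly the paper's observation $t^{2k}(k)_{t^{-1}}=(k)_t$, so the endgame coincides.
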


By the Frobenius formula of the Hecke algebra $H_n(q)$ it is readily seen that
\begin{align*}
btr(\lambda, \mu)=\langle \tilde{q}_{\lambda}(q), \tilde{q}_{\mu}(q) \rangle.
\end{align*}

 Let $H^{\lambda}_{\mu}(t)=\langle q_{\lambda}(t), q_{\mu}(t) \rangle$, then $brt(\lambda, \mu)(q)=\frac{q^{2n}}{(q-1)^{l(\lambda)+l(\mu)}}H^{\lambda}_{\mu}(q^{-1})$.

Using vertex operators and \eqref{e:qop}-\eqref{e:qop*}, we have the following relation:
\begin{align}
Q^*(z)Q(w)\frac{z-t^2w}{z-tw}=Q(w)Q^*(z)\frac{z-tw}{z-w},
\end{align}
where the rational functions are power series in negative powers of $z$.
Comparing coefficients of $z^{-n}w^{m}$ we immediately obtain the following commutation relation.
\begin{prop} For any $m,n\in\mathbb{Z}_+$, as operators on $\Lambda_{\mathbb C[t, t^{-1}]}$
\begin{align}\label{e:com3}
q^{*}_{n}q_{m}=q_{m}q^{*}_{n}+(1-t)\sum\limits_{k=1}^{min(m, n)}q_{m-k}q^*_{n-k}+(t-1)\sum\limits_{k=1}^{min(m, n)}q^*_{n-k}q_{m-k}t^k.
\end{align}
\end{prop}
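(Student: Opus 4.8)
The plan is to read off \eqref{e:com3} as the coefficient-by-coefficient shadow of the vertex operator exchange identity displayed just above the proposition, so the work splits cleanly into two parts: establishing that operator identity and then extracting coefficients. First I would derive the exchange relation by the standard contraction (Baker--Campbell--Hausdorff) computation. Writing $Q(w)=\exp(B)$ with $B=\sum_{n\geq1}\frac{1-t^n}{n}p_n w^n$ and $Q^*(z)=\exp(A)$ with $A=\sum_{m\geq1}(1-t^m)\frac{\partial}{\partial p_m}z^{-m}$, the only nontrivial bracket is $\big[\frac{\partial}{\partial p_m},p_n\big]=\delta_{mn}$, so $[A,B]=\sum_{n\geq1}\frac{(1-t^n)^2}{n}(w/z)^n$ is a scalar. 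Hence $e^A e^B=e^B e^A e^{[A,B]}$ yields $Q^*(z)Q(w)=Q(w)Q^*(z)\exp\big(\sum_{n\geq1}\frac{(1-t^n)^2}{n}(w/z)^n\big)$. Summing the logarithmic series via $\sum_{n\geq1}\frac{(1-t^n)^2}{n}x^n=-\log(1-x)+2\log(1-tx)-\log(1-t^2x)$ shows that the contraction factor equals $\frac{(z-tw)^2}{(z-w)(z-t^2w)}$; multiplying through by $\frac{z-t^2w}{z-tw}$ then produces the stated identity $Q^*(z)Q(w)\frac{z-t^2w}{z-tw}=Q(w)Q^*(z)\frac{z-tw}{z-w}$. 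This contraction step is the technical heart of the argument, and everything after it is bookkeeping.

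Next I would expand the two rational prefactors as power series in $u=w/z$ (the prescribed expansion in negative powers of $z$). A short geometric-series manipulation gives $\frac{z-t^2w}{z-tw}=1-(t-1)\sum_{k\geq1}t^k u^k$ and $\frac{z-tw}{z-w}=1+(1-t)\sum_{k\geq1}u^k$, which are the only two closed forms I need.

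Finally I would substitute $Q(w)=\sum_{m\geq0}q_m w^m$ and $Q^*(z)=\sum_{n\geq0}q_n^* z^{-n}$ and compare the coefficient of $z^{-n}w^m$ on the two sides. On the left, the factor $u^k$ shifts both indices so that $Q^*(z)Q(w)u^k$ contributes $q_{n-k}^*q_{m-k}$, giving the total coefficient $q_n^*q_m-(t-1)\sum_{k=1}^{\min(m,n)}t^k q_{n-k}^*q_{m-k}$; on the right, the analogous shift gives $q_mq_n^*+(1-t)\sum_{k=1}^{\min(m,n)}q_{m-k}q_{n-k}^*$. Equating the two and transposing $q_mq_n^*$ together with the $(1-t)$-sum to the right-hand side produces exactly \eqref{e:com3}. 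The only points that require genuine care here are preserving the operator order within each product ($q_{m-k}q_{n-k}^*$ versus $q_{n-k}^*q_{m-k}$) and truncating the summations at $\min(m,n)$ so that all subscripts remain nonnegative, since $q_j=q_j^*=0$ for $j<0$ already forces this cutoff automatically.
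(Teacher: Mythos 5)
Your proposal is correct and follows essentially the same route as the paper: the paper's proof simply asserts the exchange identity $Q^*(z)Q(w)\frac{z-t^2w}{z-tw}=Q(w)Q^*(z)\frac{z-tw}{z-w}$ (expanded in negative powers of $z$) and extracts the coefficient of $z^{-n}w^{m}$, which is exactly what you do. Your contraction computation, the closed forms $1-(t-1)\sum_{k\geq1}t^k u^k$ and $1+(1-t)\sum_{k\geq1}u^k$, and the truncation at $\min(m,n)$ all check out; you have merely supplied the details the paper leaves implicit.
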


For simplicity we denote for integer $k>0$
$$(k)_{t}=(t-1)\frac{t^{2k}-1}{t+1}=(t-1)^2[k]_{t^2}$$
and make the convention that $(k)_t=0$ for $k<0$ and $(0)_t=1$.
The following result is easily shown by induction, and also can be used as an inductive definition of $(k)_t$.
\begin{lem}\label{t:idetity}
Let $k$ be a positive integer, then we have
\begin{align}\label{e:identity}
1-t+(t-1)\sum\limits_{i=1}^{k}(k-i)_tt^i=(k)_t.
\end{align}
\end{lem}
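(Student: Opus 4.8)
The plan is to prove the identity
$$1-t+(t-1)\sum_{i=1}^{k}(k-i)_t\,t^i=(k)_t$$
by induction on $k\geq 1$, exploiting the recursive structure that the sum on the left almost reproduces itself when $k$ is replaced by $k-1$. First I would check the base case $k=1$ directly: since $(0)_t=1$ by convention, the left side is $1-t+(t-1)\cdot(0)_t\cdot t=1-t+(t-1)t=1-t+t^2-t=t^2-2t+1+\text{correction}$, so I would instead verify it against the closed form $(1)_t=(t-1)^2[1]_{t^2}=(t-1)^2$ and confirm equality.

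For the inductive step, assume the identity holds for $k-1$. The key algebraic manipulation is to isolate the $i=k$ term of the sum, writing
$$\sum_{i=1}^{k}(k-i)_t\,t^i=(0)_t\,t^k+\sum_{i=1}^{k-1}(k-i)_t\,t^i
=t^k+t\sum_{i=1}^{k-1}(k-1-i)_t\,t^i,$$
where the reindexing in the second equality shifts the argument of $(\cdot)_t$ down by one. Substituting the inductive hypothesis for the reindexed sum expresses everything in terms of $(k-1)_t$, and I would then reduce to verifying the purely algebraic recurrence
$$(k)_t=(k-1)_t\,t^2+(t-1)^2,$$
which follows immediately from the closed form $(k)_t=(t-1)^2\,[k]_{t^2}$ together with $[k]_{t^2}=t^2[k-1]_{t^2}+1$.

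An alternative and perhaps cleaner route avoids induction entirely: substitute the closed form $(j)_t=(t-1)^2[j]_{t^2}$ directly into the left-hand side and evaluate the resulting geometric-type sum $\sum_{i=1}^k[k-i]_{t^2}\,t^i$ in closed form, then simplify. Because $[k-i]_{t^2}=\frac{t^{2(k-i)}-1}{t^2-1}$, the sum splits into two geometric series in $t$, and collecting terms should collapse to $[k]_{t^2}$ after multiplication by $(t-1)$ and addition of the $1-t$ term. The main obstacle, modest as it is, lies in bookkeeping the boundary terms of the geometric sums and correctly handling the convention $(0)_t=1$ versus $(t-1)^2[0]_{t^2}=0$; the value $(0)_t=1$ is a genuine exception to the closed form, so I must take care that it enters the $i=k$ term of the sum rather than being silently replaced by $[0]_{t^2}=0$. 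Keeping that single term separate is what makes both the inductive and the direct computation go through.
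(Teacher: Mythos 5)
The overall strategy---induction on $k$, reducing everything to the recurrence $(k)_t=t^2(k-1)_t+(t-1)^2$---is sound, and it is exactly what the paper intends (the paper gives no details, remarking only that the lemma is ``easily shown by induction''). However, the displayed reindexing in your inductive step is false. You assert
\[
\sum_{i=1}^{k-1}(k-i)_t\,t^i \;=\; t\sum_{i=1}^{k-1}(k-1-i)_t\,t^i,
\]
but already for $k=2$ the left side is $(1)_t\,t=(t-1)^2t$ while the right side is $t\,(0)_t\,t=t^2$. The shift $i\mapsto i-1$ identifies $t\sum_{i=1}^{k-1}(k-1-i)_t\,t^i$ with $\sum_{i=2}^{k}(k-i)_t\,t^i$, not with $\sum_{i=1}^{k-1}$: the term that must be split off is the $i=1$ term $(k-1)_t\,t$, while the $i=k$ term $(0)_t\,t^k$ is \emph{absorbed} into the shifted sum (as its $i=k-1$ summand). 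So with $S_k:=\sum_{i=1}^k(k-i)_t\,t^i$ the correct recurrence is $S_k=(k-1)_t\,t+tS_{k-1}$, not $S_k=t^k+tS_{k-1}$. If you run the induction with your version you are left needing $(t-1)t^k+t(k-1)_t=t^2(k-1)_t$, i.e.\ $t^{k-1}=(k-1)_t$, which is false; with the corrected recurrence the induction closes:
\[
1-t+(t-1)S_k=1-t+(t-1)(k-1)_t\,t+t\bigl((k-1)_t+t-1\bigr)=(t-1)^2+t^2(k-1)_t=(k)_t,
\]
using the inductive hypothesis in the form $(t-1)S_{k-1}=(k-1)_t+t-1$ and the recurrence $[k]_{t^2}=t^2[k-1]_{t^2}+1$ (valid for $k\ge 2$; note $(k)_t=t^2(k-1)_t+(t-1)^2$ fails at $k=1$ precisely because of the convention $(0)_t=1$, so $k=1$ must remain the base case).

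Two smaller points. The base case needs no ``correction'': $1-t+(t-1)t=1-2t+t^2=(t-1)^2=(1)_t$ exactly. And your alternative direct route---substituting $(j)_t=(t-1)^2[j]_{t^2}$ for $1\le j\le k-1$, keeping the $i=k$ term $t^k$ separate because $(0)_t=1\neq(t-1)^2[0]_{t^2}$, and summing the two geometric series---does go through and would be a perfectly acceptable replacement; you correctly identified the one genuine subtlety there.
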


\begin{thm}\label{t:q*q}
Let $k\in \mathbb Z_+$ and $\mu\vDash n$ a composition, then
\begin{align}
q^*_{k}q_{\mu}=\sum\limits_{\tau\models k}\prod\limits_{\tau_a>0}(\tau_a)_t q_{\mu-\tau}.
\end{align}
\end{thm}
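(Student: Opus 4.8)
The plan is to prove the identity by induction on the length $l=l(\mu)$ of the composition, treating $q_\mu=q_{\mu_1}q_{\mu^{[1]}}$ and pushing $q^*_k$ past the leading factor $q_{\mu_1}$ by means of the commutation relation \eqref{e:com3}. Before the induction proper, I would record the key reformulation of the target. Splitting the sum over $\tau\models k$ according to its first part $\tau_1=s$ and applying the length-$(l-1)$ case to the remaining factors, one gets
\begin{align*}
\sum_{\tau\models k}\prod_{\tau_a>0}(\tau_a)_t\, q_{\mu-\tau}=\sum_{s=0}^{k}(s)_t\, q_{\mu_1-s}\bigl(q^*_{k-s}q_{\mu^{[1]}}\bigr),
\end{align*}
so it suffices to show that $q^*_k q_\mu$ equals this last expression. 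The base cases $k=0$ (where $q^*_0=\mathrm{id}$ and only $\tau=0$ contributes) and $l=0,1$ are immediate from $q^*_j.1=\delta_{j,0}$.

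For the inductive step I would apply \eqref{e:com3} directly, obtaining
\begin{align*}
q^*_k q_\mu=q_{\mu_1}\bigl(q^*_k q_{\mu^{[1]}}\bigr)+(1-t)\sum_{j=1}^{\min(\mu_1,k)}q_{\mu_1-j}\bigl(q^*_{k-j}q_{\mu^{[1]}}\bigr)+(t-1)\sum_{j=1}^{\min(\mu_1,k)}t^j\, q^*_{k-j}\bigl(q_{\mu_1-j}q_{\mu^{[1]}}\bigr).
\end{align*}
The first two terms are already in the desired shape. For the third term I would invoke the induction hypothesis on the second index $k$: since $(\mu_1-j,\mu^{[1]})$ still has length $l$ but $k-j<k$, the reformulation above gives $q^*_{k-j}q_{(\mu_1-j,\mu^{[1]})}=\sum_{s\geq0}(s)_t\,q_{\mu_1-j-s}(q^*_{k-j-s}q_{\mu^{[1]}})$. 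Substituting and collecting, for each $m\geq1$ the total coefficient of $q_{\mu_1-m}(q^*_{k-m}q_{\mu^{[1]}})$ becomes
\begin{align*}
(1-t)+(t-1)\sum_{j=1}^{m}t^j(m-j)_t,
\end{align*}
while the leading term supplies coefficient $1=(0)_t$ at $m=0$.

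The step I expect to carry the weight of the argument is precisely the collapse of this three-fold contribution into the single factor $(m)_t$, which is exactly the content of Lemma \ref{t:idetity}, namely identity \eqref{e:identity}. Once that identity is applied term by term, the coefficient of $q_{\mu_1-m}(q^*_{k-m}q_{\mu^{[1]}})$ is $(m)_t$ for every $m$, so $q^*_k q_\mu=\sum_{m=0}^{k}(m)_t\,q_{\mu_1-m}(q^*_{k-m}q_{\mu^{[1]}})$, which matches the reformulated target and completes the induction. The main obstacle is thus organizational rather than computational: one must set up the double induction (outer on $l$, inner on $k$) so that the mixed third term of \eqref{e:com3}, which does not reduce the length, is absorbed by the lower-$k$ instance of the theorem and then simplified through \eqref{e:identity}; the vanishing conventions $(s)_t=0$ for $s<0$ and $q_m=0$ for $m<0$ make the ranges of summation take care of themselves.
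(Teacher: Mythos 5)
Your proof is correct and follows essentially the same route as the paper's: both push $q^*_k$ past $q_{\mu_1}$ via the commutation relation \eqref{e:com3} and collapse the resulting coefficient $(1-t)+(t-1)\sum_{j=1}^{m}t^j(m-j)_t$ into $(m)_t$ via Lemma \ref{t:idetity}. The only difference is bookkeeping: the paper runs a single induction on $k+|\mu|$ and fully expands each term by the inductive hypothesis, whereas you run a double induction (outer on $l(\mu)$, inner on $k$) and leave the tails $q^*_{k-m}q_{\mu^{[1]}}$ unexpanded, which is immaterial.
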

\begin{proof} Use induction on $k+n$. The initial step is clear. Assume the identity holds for $q^*_{k'}q_{\mu}$ such that $k'+|\mu|<k+n$, then
\begin{align*}
&q^*_{k}q_{\mu_1}q_{\mu_2}\cdots q_{\mu_l}\\
=&q_{\mu_1}q^*_{k}q_{\mu_2}\cdots q_{\mu_l}+(1-t)\sum\limits_{i\geq1}q_{\mu_{1}-i}q^*_{k-i}q_{\mu_2}\cdots q_{\mu_l}+(t-1)\sum\limits_{i\geq1}q^*_{k-i}q_{\mu_{1}-i}q_{\mu_2}\cdots q_{\mu_l}t^i\\
=&q_{\mu_1}\sum\limits_{\tau\models k}\prod\limits_{\tau_a\geq1}(\tau_a)_t q_{\mu^{[1]}-\tau}+(1-t)\sum\limits_{i\geq1}\sum\limits_{
\tau\models k-i}\prod\limits_{\tau_a\geq 1}(\tau_a)_t q_{\mu_{1}-i}q_{\mu^{[1]}-\tau}\\
&\hskip 3cm +(t-1)\sum\limits_{i\geq1}\sum\limits_{\tau\models k-i}\prod\limits_{\tau_a\geq1}(\tau_i)_tq_{\mu_{1}-i-\tau_1}q_{\mu^{[1]}-\tau^{[1]}}t^i.
\end{align*}
The first summand is $\sum\limits_{\tau\models k}\prod\limits_{\tau_a\geq1}(\tau_a)_t q_{\mu-\tau}$ with
$\tau_1=0$. Replacing $i+\tau_1$ by $j$ in the third summand and pulling $(\tau_1)_t=(j-i)_t$ forward, we simplify the second and
the third summands as follows:
\begin{align*}
&(1-t)\sum\limits_{i\geq1}\sum\limits_{
\tau\models k-i}\prod\limits_{\tau_a\geq1}(\tau_a)_t q_{\mu_{1}-i}q_{\mu^{[1]}-\tau}+(t-1)\sum\limits_{i\geq1}\sum\limits_{j\geq i}(j-i)_tt^i\sum\limits_{\tau\models k-j}\prod\limits_{\tau_a\geq1}(\tau_a)_tq_{\mu_{1}-j}q_{\mu^{[1]}-\tau}\\
=&(1-t)\sum\limits_{i\geq1}\sum\limits_{
\tau\models k-i}\prod\limits_{\tau_a\geq1}(\tau_a)_t q_{\mu_{1}-i}q_{\mu^{[1]}-\tau}+\sum\limits_{j\geq1}(t-1)\sum\limits_{i=1}^{j}(j-i)_tt^i\sum\limits_{\tau\models k-j}\prod\limits_{\tau_a\geq1}(\tau_a)_tq_{\mu_{1}-j}q_{\mu^{[1]}-\tau}\\
=&\sum\limits_{j\geq1}(j)_t\sum\limits_{\tau\models k-j}\prod\limits_{\tau_a\geq1}(\tau_a)_tq_{\mu_{1}-j}q_{\mu^{[1]}-\tau}  ~~~\text{(by Lemma \ref{t:idetity})}.
\end{align*}
Combining this with the first summand, we see the total sum is $\sum\limits_{\tau\models k}\prod\limits_{\tau_a\geq1}(\tau_a)_t q_{\mu-\tau}$.
\end{proof}

The following is an immediate consequence of Theorem \ref{t:q*q}. 
\begin{cor}Let $\lambda, \mu\models n$ with $l(\lambda)=r$ and $l(\mu)=s$. Then
\begin{align}\label{e:H}
H_{\mu}^{\lambda}(t)=\sum\limits_{M}\prod\limits_{i=1}^{r}\prod\limits_{a\geq1}(\tau^{(i)}_a)_t.
\end{align}
where the sum is over all $r\times s$ nonnegative integer matrices $M=(\tau^{(i)}_a)_{1\leq i\leq r, 1\leq a \leq s}$ such that
the row sums are $\lambda_1,\ldots,\lambda_r$ and the column sums are $\mu_1,\ldots,\mu_s$ 
\end{cor}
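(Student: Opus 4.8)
The plan is to compute $H^{\lambda}_{\mu}(t)=\langle q_{\lambda}(t),q_{\mu}(t)\rangle$ by stripping the factors $q_{\lambda_1},\ldots,q_{\lambda_r}$ off the left argument one at a time and applying Theorem~\ref{t:q*q} at each stage. Since $q_m^{*}$ is by definition the adjoint of the multiplication operator $q_m$ with respect to the inner product \eqref{e:inner}, so that $\langle q_m u,v\rangle=\langle u,q_m^{*}v\rangle$, I would first write
\begin{align*}
H^{\lambda}_{\mu}(t)=\langle q_{\lambda_1}q_{\lambda_2}\cdots q_{\lambda_r},\,q_{\mu}\rangle=\langle q_{\lambda_2}\cdots q_{\lambda_r},\,q_{\lambda_1}^{*}q_{\mu}\rangle,
\end{align*}
and then invoke Theorem~\ref{t:q*q} to rewrite $q_{\lambda_1}^{*}q_{\mu}=\sum_{\tau^{(1)}}\prod_{a\geq1}(\tau^{(1)}_a)_t\,q_{\mu-\tau^{(1)}}$, where $\tau^{(1)}=(\tau^{(1)}_1,\ldots,\tau^{(1)}_s)$ runs over the weak compositions of $\lambda_1$ into $s$ nonnegative parts (the convention $(0)_t=1$ absorbs the zero parts, and $q_{\mu-\tau^{(1)}}$ vanishes unless $\tau^{(1)}\subset\mu$ since $q_n=0$ for $n<0$).

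Next I would iterate this move, preferably organized as a formal induction on $r$: each $\tau^{(i)}$ produced at the $i$-th step is recorded as the $i$-th row of a candidate matrix $M$, while the surviving factor $q_{\mu-\tau^{(1)}-\cdots-\tau^{(i)}}$ carries the running column deficits into the next application of Theorem~\ref{t:q*q}. After $r$ steps every factor of $q_{\lambda}$ has been transferred across the pairing and one is left with
\begin{align*}
H^{\lambda}_{\mu}(t)=\sum\prod_{i=1}^{r}\prod_{a\geq1}(\tau^{(i)}_a)_t\;\big\langle 1,\,q_{\mu-\tau^{(1)}-\cdots-\tau^{(r)}}\big\rangle,
\end{align*}
the sum ranging over all choices $\tau^{(1)},\ldots,\tau^{(r)}$ with $\tau^{(i)}$ a weak composition of $\lambda_i$ into $s$ parts.

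The final step is to evaluate $\langle 1,q_{\nu}\rangle$. Because $1$ is orthogonal to every symmetric function of positive degree and $q_{\nu}$ has constant term $1$ exactly when $\nu=0$, one gets $\langle 1,q_{\nu}\rangle=\delta_{\nu,0}$ (equivalently, this follows from $q_n^{*}.1=\delta_{n,0}$). This forces $\mu-\tau^{(1)}-\cdots-\tau^{(r)}=0$, i.e.\ $\sum_i\tau^{(i)}_j=\mu_j$ for each $j$. Combined with the constraint $\sum_j\tau^{(i)}_j=\lambda_i$ already built into Theorem~\ref{t:q*q}, the surviving data $M=(\tau^{(i)}_j)_{1\leq i\leq r,\,1\leq j\leq s}$ is precisely an $r\times s$ nonnegative integer matrix with row sums $\lambda$ and column sums $\mu$, which yields \eqref{e:H}.

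The only point that needs care is the bookkeeping: tracking how each application of Theorem~\ref{t:q*q} subtracts one row from the remaining column sums, and checking that the conventions ($q_n=0$ for $n<0$, $(0)_t=1$) are respected so that the product $\prod_{a\geq1}(\tau^{(i)}_a)_t$ over all $s$ entries agrees with the product over positive parts. I expect this indexing to be the main (purely technical) obstacle rather than any genuine difficulty, since every analytic ingredient—the adjointness of $q_m^{*}$, the commutation identity of Theorem~\ref{t:q*q}, and the evaluation $\langle 1,q_\nu\rangle=\delta_{\nu,0}$—is already available.
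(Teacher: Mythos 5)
Your argument is exactly the one the paper intends: the paper gives no written proof beyond calling the corollary ``an immediate consequence of Theorem \ref{t:q*q}'', and your unwinding (move each $q_{\lambda_i}$ across the pairing via adjointness, apply Theorem \ref{t:q*q} to peel off a row $\tau^{(i)}$ of the matrix, and finish with $\langle 1,q_{\nu}\rangle=\delta_{\nu,0}$ to force the column sums to be $\mu$) is precisely that consequence, with the conventions $(0)_t=1$ and $q_n=0$ for $n<0$ handled correctly. No gaps; this matches the paper's route.
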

This provides a simple proof of Theorem \ref{t:brt} due the fact that
$t^{2k}(k)_{t^{-1}}=(k)_t,$ and $q_i=0$ for $i<0$.


\vskip30pt \centerline{\bf Acknowledgments}
The work is partially supported by
Simons Foundation grant No. 523868 and NSFC grant No. 11531004.
\bigskip

\bibliographystyle{plain}

\end{document}